\begin{document}

\newtheorem{theorem}{Theorem}[section]
\newtheorem{lemma}[theorem]{Lemma}
\newtheorem{proposition}[theorem]{Proposition}
\newtheorem{corollary}[theorem]{Corollary}
\newtheorem{conjecture}[theorem]{Conjecture}
\newtheorem{question}[theorem]{Question}
\newtheorem{problem}[theorem]{Problem}
\newtheorem*{claim}{Claim}
\newtheorem*{criterion}{Criterion}
\newtheorem*{main_rat_thm}{Rationality Theorem}

\theoremstyle{definition}
\newtheorem{definition}[theorem]{Definition}
\newtheorem{construction}[theorem]{Construction}
\newtheorem{notation}[theorem]{Notation}

\theoremstyle{remark}
\newtheorem{remark}[theorem]{Remark}
\newtheorem{example}[theorem]{Example}

\numberwithin{equation}{subsection}

\def\Z{\mathbb Z}
\def\R{\mathbb R}
\def\Q{\mathbb Q}
\def\D{\mathcal D}
\def\E{\mathcal E}
\def\RR{\mathcal R}
\def\P{\mathcal P}
\def\F{\mathcal F}

\def\cl{\textnormal{cl}}
\def\scl{\textnormal{scl}}
\def\homeo{\textnormal{Homeo}}
\def\rot{\textnormal{rot}}

\def\Id{\textnormal{Id}}
\def\SL{\textnormal{SL}}
\def\PSL{\textnormal{PSL}}
\def\length{\textnormal{length}}
\def\fill{\textnormal{fill}}
\def\rank{\textnormal{rank}}
\def\til{\widetilde}

\title{Stable commutator length is rational in free groups}
\author{Danny Calegari}
\address{Department of Mathematics \\ Caltech \\
Pasadena CA, 91125}
\email{dannyc@its.caltech.edu}

\date{3/13/2009, Version 0.10}
\dedicatory{Dedicated to Shigenori Matsumoto on the occasion of his 60th birthday.}
\begin{abstract}
For any group, there is a natural (pseudo-)norm on the vector space $B_1^H$ of real
homogenized (group) $1$-boundaries, called the
{\em stable commutator length} norm. This norm is closely related to, and can be
thought of as a relative version of, the Gromov
(pseudo)-norm on (ordinary) homology. We show that for a free group,
the unit ball of this pseudo-norm is a rational polyhedron.

It follows that stable commutator length in free groups takes on only rational
values. Moreover every element of the commutator subgroup
of a free group rationally bounds an injective map of a surface group.

The proof of these facts yields an algorithm to compute stable
commutator length in free groups. Using this algorithm, we answer a well-known
question of Bavard in the negative, constructing explicit examples of elements
in free groups whose stable commutator length is not a half-integer.
\end{abstract}

\maketitle

\section{Introduction}
Stable commutator length is a numerical invariant of elements in the commutator subgroup
of a group. It is intimately related to (two-dimensional) bounded cohomology, and appears
in many areas of low-dimensional topology and dynamics, from the Milnor-Wood inequality,
to the $11/8$ conjecture. However, although a great deal of work has gone into
estimating or bounding stable commutator length in many contexts, there are very few known
nontrivial examples of finitely presented groups in which it can be calculated exactly,
and virtually no cases where the range of stable commutator length on a given group 
can be understood arithmetically.

The most significant results of this paper are as follows:

\begin{enumerate}
\item{We show that stable commutator length in free groups takes on only rational values,
and give an explicit algorithm to compute the value on any given element. This is the
first example of a group with infinite dimensional second bounded cohomology group $H^2_b$
in which stable commutator length can be calculated exactly.}
\item{We show how to extend stable commutator length to a (pseudo)-norm on the vector
space $B_1^H$ of homogenized real (group) $1$-cochains that are boundaries of $2$-cochains. 
In the case of a free group, this is a genuine norm. We show that the intersection of
the unit ball in this norm with any finite dimensional rational subspace of
$B_1^H$ of a free group is a finite-sided rational polyhedron. This invites
comparison with the Thurston norm on the $2$-dimensional homology of
a $3$-manifold \cite{Thurston_norm}, although the relationship between the two
cases is subtle and deserves further investigation.}
\item{We give examples of explicit elements in the commutator subgroup of $F_2$ (the
free group of rank $2$) for which the stable commutator length is not in $\frac 1 2 \Z$. This
answers in the negative a well-known question of Bavard \cite{Bavard}.}
\end{enumerate}

We now elaborate on these points in turn.

\medskip

Let $G$ be a group. For $g \in [G,G]$, the {\em commutator length} of $g$, denoted
$\cl(g)$, is the smallest number of commutators in $G$ whose product is equal to $g$.
The {\em stable commutator length} of $g$, denoted $\scl(g)$, is the limit of $\cl(g^n)/n$
as $n \to \infty$. In geometric language, (see e.g.\ Gromov \cite{Gromov_asymptotic})
$\cl$ is sometimes called ``filling genus'',
and $\scl$ is called ``stable filling genus''. This quantity is intimately related,
by Bavard duality and an exact sequence (see Theorem~\ref{duality_theorem} and 
Proposition~\ref{exact_sequence_prop}) to the second bounded cohomology $H^2_b$ of $G$, 
with its Banach norm. Despite a considerable amount of research, there are very
few known examples of groups $G$ in which $\scl$ can be calculated exactly (except when it
vanishes identically). This is partly because the groups $H^2_b$, when nontrivial,
tend to be very large in general: when $G$
is word-hyperbolic, $H^2_b$ is not merely infinite dimensional, but is not even separable
as a Banach space. Calculating $\scl$ is tantamount to solving an extremal problem
in $H^2_b$. (Technically, one solves the extremal problem in the space of {\em homogeneous
quasimorphisms} $Q$, see Definition~\ref{quasimorphism_definition}. The spaces $Q$
and $H^2_b$ are both Banach spaces, and are related by the coboundary $\delta$, 
which is Fredholm when $G$ is finitely presented.) 

Gromov (\cite{Gromov_asymptotic} 6.$C_2$) asked whether $\scl$ is always rational in
finitely presented groups. The answer to Gromov's question is known to be {\em no}:
Dongping Zhuang gave the first examples in \cite{Zhuang}. These examples occur
in generalized Stein-Thompson groups of PL homeomorphisms of the circle, where one
can show that $H^2_b$ is actually finite dimensional, and everything can be calculated
explicitly. In this paper we show that {\em $\scl$ is rational} in free
groups (and some closely related groups), and moreover we give an explicit algorithm
to compute the value of $\scl$ on any element. 

\medskip

If $g_1,g_2,\cdots,g_m$ are elements in $G$, define $\cl(g_1 + \cdots + g_m)$ to be
the smallest number of commutators in $G$ whose product is equal to the product of
conjugates of the $g_i$. Let $\scl(g_1 + \cdots + g_m)$ denote
the limit of $\cl(g_1^n + \cdots + g_m^n)/n$ as $n \to \infty$. This function can
be extended by linearity and continuity in a unique way to a pseudo-norm on $B_1$, the
vector space of real group $1$-chains on $G$ that are in the image of the boundary
map $\partial:C_2 \to C_1$. This function vanishes identically on the subspace $H$ of $B_1$
spanned by terms of the form $g^n - ng$ and $g - hgh^{-1}$ for $g,h \in G$ and $n \in \Z$, and
descends to a pseudo-norm on the quotient $B_1/H$, or $B_1^H$ for short. When $G$ is hyperbolic,
$\scl$ is a genuine norm on $B_1^H$. We show that in a free group, this $\scl$ norm is {\em piecewise
rational linear} (denoted PQL) on finite dimensional rational subspaces of $B_1^H$. So
for any finite set of elements $g_1,g_2,\cdots,g_m \in G$, there is a uniform upper
bound on the denominators of the values of $\scl$ on integral linear chains $\sum_i n_ig_i$ in $B_1^H$.

One should compare the $\scl$ norm with the Gromov-Thurston norm
\cite{Thurston_norm}, which is a norm on $H_2$ of an irreducible, atoroidal
$3$-manifold, and whose most significant feature is that it is a piecewise rational linear
function. In Thurston's definition (in which one restricts to embedded surfaces) this is
straightforward to show. In Gromov's definition (in terms of chains,
or immersed surfaces) this is a very deep theorem, whose proof depends on the full power of Gabai's
theory of sutured hierarchies \cite{Gabai_foliations}, and taut foliations.
In fact, it is reasonable to think of the $\scl$ norm as a relative  
Gromov-Thurston norm, with Gromov's definition. Our proof of rationality is conceptually
close in some ways to an argument due to Oertel \cite{Oertel} insofar as both proofs reduce the
problem of calculating the norm to a linear programming problem
in the vector space of weights carried by a finite constructible branched surface.
However, there are crucial differences between the two cases.
In Oertel's case, the branched surface might have complicated branch locus, but it
comes with an embedding in a $3$-manifold. In our case, the branch locus is simple, but
the branched surface is merely {\em immersed} in a $3$-manifold. It is intriguing to try to
find a natural generalization of both theories.

\medskip

In his seminal paper \cite{Bavard} on stable commutator length, Bavard
asked whether stable commutator length in free groups takes values in $\frac 1 2 \Z$. 
There were several pieces of direct and indirect evidence for this conjecture. Firstly,
where certain (geometric or homological) methods for estimating stable commutator length
in free groups give exact answers, these answers in every case confirm Bavard's
guess. Secondly, in the (analogous) context of $3$-manifold topology, one knows that
the Gromov norm of an integral $2$-dimensional homology class is in $2\Z$ (the factor
of $4$ arises because Gromov norm counts triangles, whereas stable commutator norm
counts handles). It was generally felt that Bavard's conjecture was eminently plausible,
and it is therefore surprising that our algorithm produces many elements whose stable
commutator length is not in $\frac 1 2 \Z$. In fact experiments suggest that arbitrarily
large denominators occur, with arbitrary prime factors. In view of these examples,
the fact that stable commutator length is rational in free groups is seen to be a more delicate and
subtle fact than one might have imagined, and stable commutator length to be a richer
invariant than previous work has suggested.

\medskip

The organization of this paper is as follows. In \S~\ref{background_section} we state
definitions and sketch proofs of background results which pertain to stable commutator
length in groups in general. In \S~\ref{free_group_section} we specialize to the case of free groups.
The purpose of this section is to state and prove the ``Rationality Theorem'',
whose precise statement is the following:
\begin{main_rat_thm}
Let $F$ be a free group.
\begin{enumerate}
\item{$\scl(g) \in \Q$ for all $g \in [F,F]$.}
\item{Every $g  \in [F,F]$ rationally bounds an extremal surface (in fact, every rational chain $C$ in $B_1^H$ rationally
bounds an extremal surface)}
\item{The function $\scl$ is piecewise rational linear on $B_1^H$.}
\item{There is an algorithm to calculate $\scl$ on any finite dimensional rational subspace of
$B_1^H$.}
\end{enumerate}
\end{main_rat_thm}
Similar rationality results hold for stable commutator length in virtually free groups,
and fundamental groups of noncompact Seifert-fibered $3$-manifolds.
Finally, in \S~\ref{algorithm_section} we explicitly describe an algorithm for computing the stable
commutator length in free groups, and discuss a simple
example that answers Bavard's question in the negative.

\section{Background}\label{background_section}

For the convenience of the reader, we collect here some basic definitions and properties
that will be used in subsequent sections. As general background, see \cite{Bavard}, 
\cite{Brooks}, \cite{Calegari_scl} and \cite{Gromov_bounded}. 
Note that the reference \cite{Calegari_scl}, although
the most detailed, complete and relevant to the material in this paper, 
is an unfinished manuscript (which is readily available online)
and therefore we have tried to refer to this manuscript by section number (which one can expect
to be reasonably stable) rather than by page number.

\subsection{Stable commutator length}
In this section we give the definitions and basic properties of stable commutator length
in groups. This is a numerical invariant of elements in the commutator subgroup of
a given group which is {\em universal} for certain kinds of extremal problems. For
background or proofs, see \cite{Bavard} or \cite{Calegari_scl}.

\begin{definition}\label{scl_definition}
Let $G$ be a group. For $g \in [G,G]$ the {\em commutator length} of $g$, denoted
$\cl(g)$, is the smallest number of commutators in $G$ whose product is equal to $g$.
The {\em stable commutator length}, denoted $\scl(g)$, is the following limit
$$\scl(g): = \lim_{n \to \infty} \frac {\cl(g^n)} n$$
\end{definition}

Note that the function $\cl(g^n)/n$ is subadditive, so the limit in 
Definition~\ref{scl_definition} exists. Notice further that
$\cl$ and $\scl$ are class functions, and that they are monotone non-increasing
under homomorphisms between groups. If we need to emphasize that $\cl$ or $\scl$
is being calculated in a fixed group, we will use subscripts; hence $\cl_G(g)$ and
$\scl_G(g)$. 

\begin{remark}
We sometimes extend $\cl$ and $\scl$ to all of $G$ by defining $\cl(g) = \infty$ if $g$ is not
in $[G,G]$, and replacing $\lim$ by $\liminf$ in the definition of $\scl$. 
Notice that $\scl(g) < \infty$ if and only if some power of $g$ is in
$[G,G]$.
\end{remark}

The functions $\cl$ and $\scl$ can be
interpreted geometrically. Let $X$ be a connected CW complex with $\pi_1(X) = G$,
and let $\gamma$ be a loop in $X$ whose free homotopy class represents the conjugacy
class of $g$. Then $\cl(g) \le n$ if and only if there exists an orientable surface
$S$ of genus $n$ with one boundary component, and a map $f:S \to X$
taking $\partial S$ to the free homotopy class of $\gamma$.

\begin{remark}
In order to be able to speak interchangeably about loops $\gamma$ in spaces $X$ as above and
their images, we assume in the sequel that all spaces $X$ are such that every free homotopy
class of loop can be realized by an embedded circle. This can be achieved, for an arbitrary
homotopy type of CW complex $X$, by multiplying by a sufficiently high dimensional cube.
\end{remark}

Genus is not multiplicative under finite covers, but Euler characteristic is.
So when we stabilize $\cl$, the relevant geometric quantity to keep track of is
derived from Euler characteristic.

\begin{notation}
Let $S$ be a compact, connected, oriented surface. Then set
$$\chi^-(S) = \min(0,\chi(S))$$
Extend $\chi^-$ additively to compact, oriented (but not necessarily connected)
surfaces $S$, so that
$$\chi^-(S) = \sum_i \chi^-(S_i)$$
where $S_i$ ranges over the components of $S$ (cf.\ \cite{Thurston_norm}).
\end{notation}

\begin{notation}
Let $S$ be a compact, connected, oriented surface. Let $X$ be a topological space, and
$\gamma:S^1 \to X$ a continuous loop. Let $f:S \to X$ be such that there is a commutative diagram
$$ \begin{CD}
\partial S @>i>> S \\
@V\partial fVV  @VVfV \\
S^1 @>\gamma>> X 
\end{CD}$$
where $i:\partial S \to S$ is the inclusion map, and define $n(S)$ by the identity
$\partial f_*[\partial S] =  n(S)[S^1]$ in $H_1$. If the (oriented) components of $\partial S$ are
denoted $\partial_i$, then $n(S)$ is the sum of the degrees of the maps $\partial f:\partial_i \to S^1$.
Informally, $n(S)$ is the degree with which $\partial S$ wraps around the loop $\gamma$.
\end{notation}

If $n(S)$ is nonzero, one says that the surface $S$ {\em rationally bounds $\gamma$}.
Strictly speaking, $n(S)$ depends on $f$ and not just on $S$, but we suppress this in our
notation. 

With these definitions, one can give a geometric interpretation of $\scl$.

\begin{lemma}\label{euler_lemma}
Let $X$ be a connected CW complex with $\pi_1(X) = G$ and let $\gamma$ be a loop in $X$
in the free homotopy class corresponding to the conjugacy class of $g$. Then
$$\scl(g) = \inf_S \frac {-\chi^-(S)} {2|n(S)|}$$
where the infimum is taken over all maps $f:S \to X$ wrapping $\partial S$ around
$\gamma$ with any degree $n(S)$.
\end{lemma}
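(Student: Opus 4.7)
The plan is to prove the two inequalities $\inf_S \le \scl(g)$ and $\scl(g) \le \inf_S$ separately.

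For the easier direction $\inf_S \le \scl(g)$, I would use the geometric characterization of $\cl$ directly from the definition. For each $n$, choose a genus-$\cl(g^n)$ orientable surface $\Sigma_n$ with one boundary component and a map $f_n\colon\Sigma_n\to X$ whose boundary runs around $\gamma$ with degree $n$. Then $n(\Sigma_n)=n$ and $-\chi^-(\Sigma_n)=2\cl(g^n)-1$, so
$$\frac{-\chi^-(\Sigma_n)}{2|n(\Sigma_n)|} \;=\; \frac{\cl(g^n)}{n} - \frac{1}{2n}.$$
Taking $n\to\infty$ yields the bound on the infimum.

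For the harder direction $\scl(g)\le\inf_S$, I start from an arbitrary $f\colon S\to X$ with $n(S)\ne 0$ and aim to construct, for each large integer $k$, an admissible surface $\Sigma_k$ with a single boundary component wrapping $\gamma$ with degree $kn(S)$ and $-\chi^-(\Sigma_k)\le -k\chi^-(S)+C$ for a constant $C$ independent of $k$. Given such a $\Sigma_k$, one has $\cl(g^{kn(S)})\le\mathrm{genus}(\Sigma_k)=(1-\chi(\Sigma_k))/2$, so dividing by $kn(S)$ and letting $k\to\infty$ gives $\scl(g)\le -\chi^-(S)/(2|n(S)|)$. The construction of $\Sigma_k$ proceeds by: (i) discarding disk, sphere, and degree-zero components of $S$, which can only decrease $-\chi^-/2|n|$; (ii) taking $k$ disjoint copies of what remains (legitimate because both $-\chi^-$ and $n$ scale by $k$); and (iii) attaching tubes (annuli mapping into a neighborhood of $\gamma$) that pair up boundary components in order to merge everything into a single connected surface with connected boundary wrapping $\gamma$ with the correct degree.

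The main obstacle, and the technical heart of the argument, is step (iii). Each tube has $\chi=0$ and therefore preserves Euler characteristic, but the map $f$ must extend across each tube as a map to $X$ whose restriction to the two glued boundary circles matches $f$. This forces the pair of glued circles to carry boundary-winding numbers that sum to zero after accounting for the annulus's induced orientations. I would handle this by a careful bookkeeping argument: after taking enough copies, one can pair boundary components of equal absolute degree but opposite sign, and tube each such pair through an annulus mapped into a neighborhood of $\gamma$ with the matching winding. The degrees of the unpaired boundary components then consolidate (again by tubing, using that any two boundary components wrapping with the same sign can be tubed along a path together with compensating winding around $\gamma$ that is absorbed into the annulus) into a single boundary circle wrapping $\gamma$ with degree $kn(S)$. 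The total number of tubes used is $O(1)$ in $k$ (linear in the number of boundary components of one copy of $S$), which accounts for the constant $C$ above and vanishes in the limit $k\to\infty$. This reduction to the connected-boundary case is where I anticipate the most bookkeeping, and where the hypothesis that $\chi^-$ is additive and behaves well under coverings and disjoint unions is indispensable.
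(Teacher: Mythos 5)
Your easy direction $\inf_S \le \scl(g)$ is correct and matches the paper: restrict to connected surfaces with one boundary component, where the genus computation gives exactly the telescoping estimate you write. The hard direction has the right ingredients in outline (discard useless components, consolidate boundary components by tubing, pass to a limit), but the central quantitative claim is wrong and the argument does not close.

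The gap is in step (iii). If $S$ has $p$ boundary components, then $k$ disjoint copies of $S$ have $kp$ boundary components. Opposite-sign pairs can sometimes be glued by annuli at zero cost to $\chi$, but in general this does nothing: if $S$ is monotone (all boundary degrees positive), there are no opposite-sign pairs at all, before or after taking copies. You are then forced to merge all $kp$ boundary circles into one by attaching $1$-handles (equivalently, pairs of pants), and each such attachment increases $-\chi^-$ by exactly $1$. So the number of tubes is $kp-1 = \Theta(k)$, not $O(1)$, and the constant $C$ in your bound $-\chi^-(\Sigma_k) \le -k\chi^-(S) + C$ actually grows linearly in $k$. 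Dividing by $kn(S)$ you obtain only
$$\scl(g) \;\le\; \frac{-\chi^-(S)}{2n(S)} + \frac{p}{2n(S)},$$
with an error term that does not vanish in the limit. The disjoint-copy approach therefore cannot prove the sharp inequality.

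What the paper does instead is replace ``take $k$ disjoint copies'' by ``pass to a connected cyclic cover of degree $m$''. After an initial $O(1)$ tubing to make $S$ connected, a connected surface with $p$ boundary components admits, for every $m$ coprime to $p-1$, a degree-$m$ cyclic cover $\widehat S$ which again has exactly $p$ boundary components (each covering a boundary circle of $S$ with degree $m$). Such a cover scales both $-\chi^-$ and $n(\cdot)$ by $m$ while keeping the boundary count fixed at $p$. Tubing those $p$ components into one then costs $p-1$, a genuine constant, so
$$\cl\bigl(g^{m\,n(S)}\bigr) \;\le\; \frac{1 + m\bigl(-\chi^-(S)\bigr) + (p-1)}{2},$$
and dividing by $m\,n(S)$ and letting $m\to\infty$ gives the desired inequality. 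The cyclic cover, not a disjoint union, is what makes the tubing overhead vanish; this is the ingredient your proposal is missing.
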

\begin{proof}
An inequality in one direction
can be obtained by restricting the class of admissible $S$ to those that are connected
with exactly one boundary component. To obtain the inequality in the other direction,
first observe that components without boundary can be thrown away without
increasing $-\chi^-$. Passing to a cover multiplies both $-\chi^-$ and $n$ by the same factor.
Moreover an orientable surface with $p$ boundary components admits a cover (in fact, a cyclic
cover) of degree $m$ which also has $p$ boundary components, providing $m$ and $p-1$ are coprime.
After passing to such a cover with $m$ very large, multiple boundary components can be tubed together with
$1$-handles (whose image in $X$ can be taken to be a point), 
increasing $-\chi^-$ by a term which is arbitrarily small compared to $n$, thereby proving the
theorem.
\end{proof}

By changing the orientation on $S$ if necessary, we may always take $n(S)$ to be positive. In the sequel
we therefore adhere to the convention that $n(S)$ is positive unless we explicitly say otherwise.
On the other hand, even if $n(S)$ is positive, if $S$ has more than one boundary component, 
some components might map to $\gamma$ with positive degree, and others with negative
degree. Say further that a surface $S$ is {\em monotone} if the degree of every component $\partial_i \to \gamma$ is
positive. The following lemma shows that for the purposes of computing $\scl$, one can restrict
attention to monotone surfaces.

\begin{lemma}\label{monotone_surface_lemma}
Let $f:S \to X$ be a connected surface with $\chi(S)<0$ that rationally bounds $\gamma$. Then
there is another surface $f':S' \to X$ which is monotone, and satisfies $-\chi^-(S')/2n(S') \le -\chi^-(S)/2n(S)$.
\end{lemma}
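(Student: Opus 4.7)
The plan is to produce the monotone $S'$ by two surgeries on $S$ that do not worsen the ratio $-\chi^-/2n$: first pass to a finite cover (which scales both $\chi$ and $n$ by the cover degree), and then glue in annuli to cancel pairs of oppositely-oriented boundary components with matched absolute wrapping degrees (which preserves both $\chi$ and $n$).

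With $d_i$ denoting the wrapping degree of the $i$-th boundary $\partial_i$ of $S$ and $D = \operatorname{lcm}\{|d_i|\}$, I would first construct a finite cover $\tilde S \to S$ of degree $D$ in which each $\partial_i$ lifts to $|d_i|$ components, each of local degree $D/|d_i|$ over $\partial_i$, so that every lifted component maps to $\gamma$ with degree $\operatorname{sgn}(d_i)\cdot D$. Such a cover corresponds to a finite quotient $\phi:\pi_1(S)\to Q$ in which the $i$-th boundary class has image of order $D/|d_i|$. Since $S$ has boundary (the closed case of the lemma is vacuous), $\pi_1(S)$ is free, and since the boundary classes satisfy a single surface-group relation, such $Q$ can be built by choosing images of the genus generators so that their commutators realize the element of $[Q,Q]$ forced by the relation; if the genus is too small for this, one first passes to a higher-genus cover of $S$.

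Next, in $\tilde S$ every lifted boundary has $|\text{degree}|=D$. I would pair each negative lift with a positive lift and glue in an annulus $S^1\times[0,1]$ mapped to $\gamma$ by $(z,t)\mapsto \gamma(z^D)$, identifying its two boundary circles (with opposite orientations, as required by the gluing) with the chosen pair. Since the annulus and the two gluing circles all have Euler characteristic zero, this operation leaves $\chi$ unchanged; since the cancelled pair contributes $+D-D=0$ to the total boundary degree, $n$ is also unchanged. Because $n(\tilde S)=Dn(S)>0$ there are strictly more positive than negative lifts, so all negative lifts can be eliminated, producing a monotone surface $S'$ with $\chi(S')=D\chi(S)$ and $n(S')=Dn(S)$. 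Therefore
\[
\frac{-\chi^-(S')}{2n(S')} \;\le\; \frac{-\chi(S')}{2n(S')} \;=\; \frac{-\chi(S)}{2n(S)} \;=\; \frac{-\chi^-(S)}{2n(S)},
\]
with equality unless some components of $S'$ have nonnegative Euler characteristic.

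The hard part will be constructing the cover with the prescribed boundary-lifting behavior; once matched degrees are in hand the annulus surgery is routine. This reduces to a group-theoretic existence problem for the free group $\pi_1(S)$ — realizing each boundary class in a finite quotient with prescribed order subject to the surface relation — which is handled using the residual finiteness of free groups together with the flexibility afforded by commutator generators (enlarged, if necessary, by passing to a higher-genus cover).
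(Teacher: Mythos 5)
Your overall strategy — pass to a finite cover so that every boundary component wraps around $\gamma$ with the same absolute degree, then cancel opposite-sign pairs (gluing them directly or through an annulus, which is the same thing and preserves $\chi$ and $n$) — is exactly the paper's strategy, and the final bookkeeping is essentially right (though your inequality $-\chi^-\le -\chi$ is backwards; the definition gives $\chi^-\le\chi$, hence $-\chi^-\ge-\chi$, and the step only goes through because the covering surface is connected with $\chi<0$, so the two are in fact equal). You also omit the case $d_i=0$, which the paper disposes of at the start by compressing those boundary circles.

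The real gap is exactly what you flag as ``the hard part.'' Your degree-$D$ cover, in which $\partial_i$ lifts to $|d_i|$ circles each of local degree $D/|d_i|$, corresponds to $\phi:\pi_1(S)\to Q$ with $|Q|=D$ and $\mathrm{ord}\,\phi(\partial_i)=D/|d_i|$, and such a $Q$ need not exist: for $d_1=-2$, $d_2=3$, $D=6$ the only candidates are $\Z/6\Z$ and $S_3$, and in neither can you place an element of order $3$ and an element of order $2$ whose product lies in the commutator subgroup (in $\Z/6\Z$ the sum $\pm2+3$ is never $0$; in $S_3$ a $3$-cycle times a transposition is a transposition, which is not in $A_3$). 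Your suggested repair via higher-genus covers is not specific enough to close this; in particular genus does nothing for abelian $Q$. The paper sidesteps the issue with a sharper intermediate step: since $\chi(S)<0$ one can first pass to a cover of positive genus, and then take the degree-$2$ cover in which \emph{every} boundary circle of $S$ has exactly two preimages. After this, the boundary components come in pairs with equal degrees $n_i$, so the assignment $\partial_i\mapsto n_i$, $\partial_i'\mapsto -n_i$ to $\Z/N\Z$ (with $N=\mathrm{lcm}|n_i|$) automatically sums to zero and hence extends over $\pi_1(S)$. The kernel cover then has all boundary degrees $\pm N$, and the cancellation step finishes the proof. So the approach you propose is the right one in spirit, but you need the pairing trick (or something like it) to actually produce the cover.
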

\begin{proof}
Each component $\partial_i$ of $\partial S$ maps to $\gamma$ with some degree $n_i \in \Z$, where 
$\sum_i n_i = n(S)$. If some $n_i$ is zero, the image $f(\partial_i)$ is homotopically trivial in $X$, so we may
reduce $-\chi^-(S)$ without affecting $n(S)$ by compressing $\partial_i$. So without loss of generality, assume
that no $n_i$ is zero.

Since $\chi(S)$ is negative, there is a finite cover of $S$ with positive genus. If $S$ is a connected
surface with positive genus and negative Euler characteristic, there is a connected 
degree $2$ cover of $S$ such that each boundary component in $S$ has exactly two preimages in the cover.
Hence, after passing to a finite cover if necessary (which does not affect the ratio of $-\chi^-$ to
$n(\cdot)$) we can assume that the boundary components $\partial_i$ of $S$ come in {\em pairs}
with equal degrees $n_i$.

Let $N$ be the least common multiple of the $|n_i|$. Define a function $\phi$ from the set of
boundary components of $S$ to $\Z/N\Z$ as follows. Divide the set of components into pairs
$\partial_i,\partial_j$ for which $n_i = n_j$, and define $\phi(\partial_i) = n_i$ and
$\phi(\partial_j) = -n_i$. Then $\sum_i \phi(\partial_i) = 0$, so $\phi$ extends to a surjective
homomorphism from $\pi_1(S)$ to $\Z/N\Z$. If $S''$ is the cover associated to the kernel, then
each component of $\partial S''$ maps to $\gamma$ with degree $\pm N$. Pairs of components whose
degrees have opposite sign can be glued up (which does not affect $-\chi^-$ or $n(\cdot)$) until all
remaining components have degrees with the same (positive) sign. The resulting surface $S'$
satisfies the conclusion of the lemma.
\end{proof}

For more details, see \cite{Calegari_scl}, \S~2.1.

\subsection{Extremal surfaces}

\begin{definition}
A map $f:S \to X$ rationally bounding $\gamma$ is {\em extremal} if $S$ has no disks or closed
components, and there is an
equality $\scl(g) = -\chi^-(S)/2n(S)$.
\end{definition}

Notice that $\scl(g)$ must be rational for an extremal surface to exist.

\begin{lemma}\label{extremal_lemma}
An extremal surface is $\pi_1$-injective.
\end{lemma}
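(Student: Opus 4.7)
The plan is to argue by contradiction via a compression of $S$ along a simple essential curve in $\ker(f_*)$. First I would invoke Lemma~\ref{monotone_surface_lemma} to replace $S$ by a monotone extremal surface: monotonicity rules out annulus components (the two boundaries of an annulus would carry opposite degrees to $\gamma$), so every component of $S$ has $\chi<0$ and $-\chi^-(S)=-\chi(S)$.

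Assuming $f_*$ has nontrivial kernel, I would next produce a two-sided essential simple closed curve $\alpha\subset S$ with $f(\alpha)$ null-homotopic in $X$. This is a standard surgery (``loop theorem'' for surfaces) argument: starting from any immersed essential representative of a class in $\ker(f_*)$, one iteratively lowers the self-intersection number by smoothing crossings, using at each crossing that the curve decomposes as a concatenation $\alpha_1\alpha_2$ with $f_*[\alpha_1]f_*[\alpha_2]=1$. Orientability of $S$ makes $\alpha$ automatically two-sided.

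Now compress: let $S^\circ$ be $S$ cut along $\alpha$ and let $\tilde S$ be the result of capping the two copies of $\alpha$ with disks mapped into $X$ via a chosen null-homotopy of $f(\alpha)$. Then $\chi(\tilde S)=\chi(S)+2$ and $\partial\tilde S=\partial S$ with the same boundary degrees, so $n(\tilde S)=n(S)$. The crucial verification is that $\tilde S$ has no disk or sphere components. A disk component can only arise from (i) a disk piece of $S^\circ$ bounded by $\alpha$, contradicting essentiality of $\alpha$; (ii) an annular piece of $S^\circ$ bounded by $\alpha$ and by some $\partial_i\subset\partial S$, which forces $f(\partial_i)$ to be null-homotopic and hence $n_i=0$, contradicting monotonicity; or (iii) $S$ a once-punctured torus with $\alpha$ non-separating, in which case the resulting disk exhibits $\gamma^{n(S)}$ as null-homotopic in $X$, forcing $g$ to be torsion and $\scl(g)=0$---excluded since $\scl(g)>0$ for extremal $S$. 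Sphere components are ruled out by the same analysis, and any closed components of genus $\ge 1$ can be discarded without changing $n$ or increasing $-\chi^-$.

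Consequently every remaining component has $\chi<0$, so $-\chi^-(\tilde S)\le -\chi(\tilde S)=-\chi^-(S)-2<-\chi^-(S)$ while $n(\tilde S)=n(S)$, contradicting the extremality of $S$. The main obstacle is step two: the careful surgery bookkeeping required to extract a simple essential representative of a kernel class; once this is in hand, the compression inequality is a direct Euler characteristic calculation.
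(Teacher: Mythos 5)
The heart of your argument is step two, and that step has a genuine gap. From an immersed loop $\alpha$ with $f_*[\alpha]=1$ and a self-crossing decomposition $\alpha\simeq\alpha_1\alpha_2$, the relation $f_*[\alpha_1]f_*[\alpha_2]=1$ tells you only that $f_*[\alpha_1]$ and $f_*[\alpha_2]$ are mutually inverse; it does not put either loop in $\ker f_*$, so you cannot pass to a sub-loop with fewer double points and keep inducting. There is no ``loop theorem for surfaces'' of this kind, and the statement you are reaching for --- that a non-$\pi_1$-injective map from a surface must kill an essential simple closed curve --- is \emph{false} for maps to general targets. A concrete obstruction: take $S$ a once-punctured torus, $\pi_1(S)=F_2=\langle a,b\rangle$, and let $f_*$ be the quotient $F_2\to\Z/2\Z * \Z$ killing $a^2$. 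The kernel $\langle\!\langle a^2\rangle\!\rangle$ is nontrivial, but contains no primitive element and does not contain $[a,b]$, hence contains no conjugate of (any power of) a class represented by an essential simple closed curve on $S$. So for this $f$ no simple closed curve in $S$ maps to a null-homotopic loop, even though $f_*$ is far from injective. (Of course this particular $S$ is not extremal --- the lemma is true --- but it shows your step two cannot be carried out in general, so some further input specific to the situation is required.)

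The paper's proof supplies exactly that input: Scott's theorem that surface groups are LERF (\cite{Scott}). One does not look for an embedded loop in $\ker f_*$ inside $S$ itself; instead, given any essential immersed $\alpha$ with $f(\alpha)$ null-homotopic, one passes to a finite cover $\widehat{S}\to S$ to which $\alpha$ lifts as an \emph{embedded} loop $\widehat{\alpha}$. Since $-\chi^-$ and $n(\cdot)$ both scale by the covering degree, $\widehat{S}$ is again extremal, and compressing $\widehat{S}$ along $\widehat{\alpha}$ (which is still null-homotopic under $\widehat{f}=f\circ\mathrm{proj}$) strictly decreases $-\chi^-$ while preserving $n$, giving the contradiction. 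Your Euler-characteristic bookkeeping in the compression step is essentially correct and is the same computation the paper performs; it is the production of the embedded curve where LERF is indispensable and where your argument breaks.

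A smaller logical slip: you open by invoking Lemma~\ref{monotone_surface_lemma} to ``replace $S$ by a monotone extremal surface.'' The lemma asserts that \emph{every} extremal surface is $\pi_1$-injective; replacing $S$ by a different (monotone) surface and proving the replacement is injective does not establish the claim for the original $S$. The paper's proof makes no monotonicity reduction, and none is needed once LERF is in hand, since one can always discard trivial components produced by the compression without affecting $n$ or increasing $-\chi^-$.
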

\begin{proof}
Let $f:S \to X$ be extremal. Suppose there is some essential immersed loop $\alpha$ in $S$ for which
$f(\alpha)$ is null-homotopic. Since surface groups are LERF (see \cite{Scott}) there is a finite cover
$\widehat{S}$ of $S$ to which $\alpha$ lifts as an embedded loop. Let $\widehat{f}:\widehat{S} \to X$
lift the map $f$ (i.e.\ $\widehat{f}$ is the composition of $f$ with the
covering projection $\widehat{S} \to S$). Note that $-\chi^-(\widehat{S})/2n(\widehat{S}) = -\chi^-(S)/2n(S)$.

Let $\widehat{\alpha}$ denote an embedded preimage of 
$\alpha \subset S$ in $\widehat{S}$. Since $\widehat{\alpha}$ is nullhomotopic under $\widehat{f}$,
we can surger $\widehat{S}$ along $\widehat{\alpha}$ 
to produce a surface $S'$ with $-\chi^-(S') < -\chi^-(\widehat{S})$
but with $n(S') = n(\widehat{S})$. But this contradicts the hypothesis that $S$ is extremal.
\end{proof}

Lemma~\ref{monotone_surface_lemma} shows that if there is an extremal surface for $\gamma$, there
is a monotone extremal surface.

\subsection{Quasimorphisms}

A brief discussion of quasimorphisms, though not strictly logically necessary for the results of
this paper, nevertheless provides some useful context and explains an important connection
with the theory of bounded cohomology.

\begin{definition}\label{quasimorphism_definition}
Let $G$ be a group. A {\em quasimorphism} on $G$ is a function $\phi:G \to \R$ for
which there exists some least non-negative constant $D(\phi)$ called the {\em defect},
so that the following inequality holds
$$|\phi(g) + \phi(h) - \phi(gh)| \le D(\phi)$$
for all $g,h \in G$. A quasimorphism is {\em homogeneous} if $\phi(g^n) = n\phi(g)$ for
all $g \in G$ and all $n \in \Z$.
\end{definition}
In words, a quasimorphism on a group is a homomorphism up to a bounded error. A quasimorphism
is a genuine homomorphism if and only if the defect is zero.

The set of quasimorphisms (resp. homogeneous quasimorphisms) on $G$ admits the structure of
a real vector space. Denote the vector space of quasimorphisms on $G$ by $\widehat{Q}(G)$,
and the vector space of homogeneous quasimorphisms by $Q(G)$.

\begin{proposition}[Bavard \cite{Bavard}, Prop. 3.3.1]\label{exact_sequence_prop}
Let $G$ be a group. There is an exact sequence
$$0 \to H^1(G;\R) \to Q(G) \xrightarrow{\delta} H^2_b(G;\R) \to H^2(G;\R)$$
where $H^*_b$ denotes {\em bounded cohomology} (with real coefficients), and
$\delta$ denotes the coboundary on group $1$-cochains.
\end{proposition}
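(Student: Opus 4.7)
The plan is to construct the four maps in the sequence and then verify exactness at each interior term. The first map $H^1(G;\R) \to Q(G)$ sends a cohomology class (with trivial $\R$-coefficients this is just a homomorphism $G \to \R$) to itself, viewed as a homogeneous quasimorphism of defect $0$; it is evidently injective. The map $\delta$ sends $\phi \in Q(G)$ to the class of the bounded $2$-cocycle $(g,h) \mapsto \phi(g)+\phi(h)-\phi(gh)$, whose sup-norm is at most $D(\phi)$ and which is a cocycle since $\delta^2=0$ on ordinary cochains. The last map $H^2_b(G;\R) \to H^2(G;\R)$ is the natural comparison map induced by the inclusion of bounded cochains into all cochains.

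For exactness at $Q(G)$, I would assume $\delta\phi = 0$ in $H^2_b$, i.e.\ $\delta\phi = \delta b$ for some bounded $1$-cochain $b$. Then $\phi - b$ is an ordinary $1$-cocycle, hence a homomorphism $G \to \R$. Homogeneity of $\phi$ combined with boundedness of $b$ forces
$$\phi(g) \;=\; \lim_{n \to \infty} \frac{\phi(g^n)}{n} \;=\; \lim_{n \to \infty} \frac{(\phi - b)(g^n)}{n} \;=\; (\phi - b)(g),$$
so $\phi$ itself is a homomorphism and lies in the image of $H^1(G;\R)$.

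For exactness at $H^2_b(G;\R)$, the composition vanishes because for $\phi \in Q(G)$ the class $\delta \phi$ is already the coboundary of the ordinary $1$-cochain $\phi$. Conversely, given a bounded $2$-cocycle $c$ whose image in $H^2$ is trivial, I would write $c = \delta \phi_0$ for some (a priori unbounded) $1$-cochain $\phi_0$; boundedness of $c$ then makes $\phi_0$ into a (possibly inhomogeneous) quasimorphism with $D(\phi_0) \le \|c\|_\infty$. I would then homogenize by setting $\phi(g) := \lim_n \phi_0(g^n)/n$ and check that $\phi$ is a well-defined homogeneous quasimorphism whose coboundary represents the same class as $c$ in $H^2_b$.

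The main technical content is the homogenization step. Existence of $\lim_n \phi_0(g^n)/n$ follows from iterating the quasimorphism inequality to obtain $|\phi_0(g^n) - n\phi_0(g)| \le (n-1) D(\phi_0)$, together with a Fekete-style comparison of $\phi_0(g^{mn})/mn$ with $\phi_0(g^n)/n$ showing the sequence is Cauchy. The uniform estimate $\sup_g |\phi(g) - \phi_0(g)| \le D(\phi_0)$ then shows that $\phi - \phi_0$ is a bounded $1$-cochain, so $\delta \phi$ and $c = \delta \phi_0$ represent the same class in $H^2_b(G;\R)$. Checking that $\phi$ is genuinely homogeneous and has defect controlled by a small multiple of $D(\phi_0)$ is a routine manipulation of these bounds, after which nothing remains to prove.
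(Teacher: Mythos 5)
Your argument is correct, and it is the standard proof of this exact sequence (essentially Bavard's original argument, and the one reproduced in most surveys of bounded cohomology). Note that the paper itself does not include a proof of this proposition; it is stated as a citation to Bavard's paper, so there is nothing in the paper to compare against.

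Two small points worth making explicit if you were to write this out in full. First, the inequality $|\phi_0(g^n) - n\phi_0(g)| \le (n-1)D(\phi_0)$ by itself does not give convergence of $\phi_0(g^n)/n$; you do need the Fekete subadditivity argument you allude to, applied to $n \mapsto \phi_0(g^n) + D(\phi_0)$ (and its negative, to rule out $-\infty$). Second, verifying $\phi(g^{-1}) = -\phi(g)$ uses the auxiliary bound $|\phi_0(g) + \phi_0(g^{-1})| \le 2D(\phi_0)$, which follows from the quasimorphism inequality applied to $(g, g^{-1})$ together with $|\phi_0(e)| \le D(\phi_0)$; this is the step that makes the homogenization genuinely homogeneous for all integer exponents, not just positive ones. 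With those details filled in, the proof is complete.
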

See \cite{Gromov_bounded} for an introduction to bounded cohomology. 
Note that when $H^1$ and $H^2$ are finite dimensional (as is the case when $G$
is finitely presented) then $\delta$ is Fredholm (with respect to natural Banach norms
on $Q/H^1$ and $H^2_b$).

There is a kind of duality, called {\em Bavard duality}, 
relating commutator length and quasimorphisms.
The most concise statement of this duality is the following:
\begin{theorem}[Bavard's Duality Theorem \cite{Bavard}, p. 111]\label{duality_theorem}
Let $G$ be a group. For any $g \in [G,G]$ there is an equality
$$\scl(g) = \frac 1 2 \sup_{\phi \in Q(G)} \frac {\phi(g)} {D(\phi)}$$
\end{theorem}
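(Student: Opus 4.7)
The plan is to prove the two inequalities separately, with the easier lower bound first and the sharper upper bound via Hahn--Banach.

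For the bound $\scl(g) \ge \tfrac{1}{2}\phi(g)/D(\phi)$, I would first establish the standard fact that every homogeneous quasimorphism is conjugation invariant. Applying $\phi$ to the $n$-th power of $xyx^{-1}$ and using homogeneity plus the defect inequality gives $|\phi(xyx^{-1}) - \phi(y)| \le 2D(\phi)/n$ for all $n$, hence equals $0$. Consequently, for any commutator $[a,b]$, writing $\phi([a,b]) = \phi((aba^{-1})\cdot b^{-1})$ and applying the quasimorphism inequality together with $\phi(aba^{-1}) = \phi(b)$ and $\phi(b^{-1}) = -\phi(b)$, we get $|\phi([a,b])| \le D(\phi)$. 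Then if $g = \prod_{i=1}^n [a_i,b_i]$ realizes $\cl(g)=n$, applying the defect inequality $n-1$ times yields $|\phi(g)| \le (2n-1)D(\phi) \le 2\cl(g)D(\phi)$. Substituting $g^k$ and using homogeneity gives $k|\phi(g)| \le 2\cl(g^k)D(\phi)$; divide by $k$ and send $k \to \infty$.

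For the reverse inequality, I would use Hahn--Banach on the pseudo-normed vector space $(B_1^H,\scl)$, whose seminorm property was already established in the Introduction. Fix $g \in [G,G]$. On the one-dimensional subspace $\R\cdot [g] \subset B_1^H$, define the linear functional $t[g] \mapsto t\cdot \scl(g)$, which has operator norm $1$. Extend by Hahn--Banach to a linear functional $L$ on $B_1^H$ satisfying $|L(c)| \le \scl(c)$ for all $c \in B_1^H$. Lift $L$ to a linear functional $\tilde{L}$ on all of $C_1(G;\R)$ by choosing any linear complement of $B_1$ in $C_1$ (and extending by zero on the complement and on the subspace $H$ already killed by $L$). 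Define $\phi:G \to \R$ by $\phi(g) := \tilde{L}(g)$, where $g$ is viewed as a basis vector.

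The crucial step is then to verify that $\phi$ is a homogeneous quasimorphism with $D(\phi) \le 2$ and $\phi(g) = \scl(g) \cdot 2 \cdot (\text{the normalization})$, so that $\phi(g)/D(\phi) = 2\scl(g)$. Homogeneity is automatic because $\tilde{L}$ vanishes on $g^n - ng \in H$. For the defect, note $\phi(g)+\phi(h)-\phi(gh) = \tilde{L}(g + h - gh)$, and $g+h-gh$ is (minus) the boundary of the bar $2$-cochain $[g|h]$. The content is therefore to show that this particular type of $1$-boundary has controlled $\scl$-pseudo-norm; more generally, one reformulates $\scl$ as an $\ell^1$-filling norm $2\scl(c) = \inf\{\|\sigma\|_1 : \partial \sigma = c \text{ in } B_1^H\}$ using the correspondence between surfaces bounding $\gamma$ and $2$-chains with prescribed boundary (via Lemma~\ref{euler_lemma} and a triangulation argument relating $-\chi^-$ to $\ell^1$-norms of $2$-chains in the bar resolution). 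With this reformulation in hand, Bavard duality becomes the standard $\ell^1$--$\ell^\infty$ duality furnished by Hahn--Banach.

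The main obstacle is the last step, namely identifying the geometric seminorm $\scl$ on $B_1^H$ (defined via Euler characteristics of surfaces) with the algebraic $\ell^1$-filling norm on the bar resolution. This requires a careful geometric-to-combinatorial translation: given a surface $f:S \to X$ bounding $\gamma$, triangulate and read off an explicit $2$-chain in the bar resolution whose $\ell^1$-norm is controlled by $-\chi^-(S)$, and conversely glue a surface out of simplices in the bar resolution. Once this equivalence is in place, the quasimorphism condition on the Hahn--Banach extension follows essentially for free.
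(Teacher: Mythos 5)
The paper does not prove this theorem; it cites it from Bavard \cite{Bavard}. So there is no internal proof to compare against, and I will assess your argument on its own merits.

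Your treatment of the lower bound $\scl(g) \ge \tfrac{1}{2}\phi(g)/D(\phi)$ is correct and standard: conjugation-invariance of homogeneous quasimorphisms via the stability trick, $|\phi([a,b])| \le D(\phi)$, the telescoping estimate $|\phi(g)| \le (2\cl(g)-1)D(\phi)$, and then homogenize. No issues there.

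The overall skeleton of the upper bound is also the right one (it is essentially Bavard's own argument): apply Hahn--Banach to the seminorm $\scl$ on $B_1^H$, extend the resulting functional $L$ to $\tilde L$ on $C_1$ vanishing on $H$, and set $\phi(x) = \tilde L(x)$; homogeneity is automatic because $x^n - nx \in H$, and the defect is controlled by $|\phi(x)+\phi(y)-\phi(xy)| = |\tilde L(x + y - xy)| \le \scl(x + y - xy)$. But you have misidentified where the work lies. You call the identification of $\scl$ with an $\ell^1$-filling norm ``the main obstacle'' and propose a triangulation-based translation between surfaces and bar-resolution $2$-chains. This is both unnecessary and substantially harder than what is actually needed. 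Since $x + x^{-1} \in H$, we have $x + y - xy \equiv x + y + (xy)^{-1}$ in $B_1^H$, and the chain $x + y + (xy)^{-1}$ is bounded by a pair of pants (a sphere with three boundary circles mapping once around each of $x$, $y$, $(xy)^{-1}$). This surface has $\chi = -1$ and $n(S)=1$, so Lemma~\ref{general_euler_lemma} immediately gives $\scl(x+y-xy) \le \tfrac{1}{2}$. Hence $D(\phi) \le \tfrac{1}{2}$, and since $\phi(g) = L([g]) = \scl(g)$ by construction, $\phi(g)/D(\phi) \ge 2\scl(g)$, which is exactly the reverse inequality. No $\ell^1$-norm reformulation is required.

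Finally, your constants are muddled: you write $D(\phi)\le 2$ and invoke an unexplained factor-of-$2$ ``normalization'' in $\phi(g)$, where in fact the normalization you chose gives $\phi(g)=\scl(g)$ and $D(\phi)\le\tfrac{1}{2}$. The final formula $\phi(g)/D(\phi) = 2\scl(g)$ is nonetheless what you want, so this is bookkeeping confusion rather than a fatal error, but it should be fixed. The incidental reformulation $2\scl(c)=\|c\|_{\mathrm{fill}}$ you state is also off by a factor (the standard normalization is $\|c\|_{\mathrm{fill}}=4\scl(c)$), another symptom of the same imprecision.
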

Note that one should restrict attention to $\phi \in Q(G) - H^1(G)$ since if $\phi$ is
a homomorphism, then both $\phi(g) = 0$ for $g \in [G,G]$, and $D(\phi)=0$.

\subsection{Stable commutator length as a norm}

The functions $\cl$ and $\scl$ can be extended to finite sums as follows.
\begin{definition}\label{general_scl_definition}
Let $G$ be a group. Let $g_1, \cdots,g_m$ be elements in $G$ (not necessarily distinct).
Define 
$$\cl(g_1 + g_2 + \cdots + g_m) = 
\inf_{h_1,\cdots,h_{m-1} \in G} \cl(g_1h_1g_2h_1^{-1}h_2g_3h_2^{-1} \cdots h_{m-1}g_mh_{m-1}^{-1})$$
and define
$$\scl(g_1 + g_2 + \cdots + g_m) = \lim_{n \to \infty} \frac {\cl(g_1^n + \cdots + g_m^n)} n$$
\end{definition}
Note that $\cl$ and $\scl$ depend only on the individual conjugacy classes of the summands, and
are commutative in their arguments.
Geometrically, if $X$ is a CW complex with $\pi_1(X) = G$ and $\gamma_1,\cdots,\gamma_m$ are loops
representing the conjugacy classes of $g_1,\cdots,g_m$ respectively, then $\cl(\sum g_i)$ is the smallest
genus surface $S$ with $m$ boundary components $\partial_i$ for which there is a map $f:S \to X$ 
wrapping each $\partial_i$ around $\gamma_i$.
It is worth remarking that the function $\cl_n:=\cl(\sum g_i^n)$ is not subadditive, but that the
``corrected'' function $\cl_n + (m-1)$ is subadditive, and therefore the limit exists in
Definition~\ref{general_scl_definition}, providing $\cl$ is not infinite. 

\begin{notation}
Let $S$ be a compact, connected, oriented surface. Let $X$ be a topological space, and
$\gamma_i:S^1 \to X$ for $1 \le i \le m$ continuous loops. Let $f:S \to X$ be such that there is a commutative diagram
$$ \begin{CD}
\partial S @>i>> S \\
@V\partial fVV  @VVfV \\
\coprod_i S^1 @>\coprod_i \gamma_i>> X 
\end{CD}$$
where $i:\partial S \to S$ is the inclusion map. Suppose there is an integer
$n(S)$ so that
$\partial f_*[\partial S] =  n(S)[\coprod_i S^1]$ in $H_1$. Then say $f:S \to X$ is
{\em admissible}. Informally, $n(S)$ is the common degree with which $\partial S$ wraps 
around each loop $\gamma_i$.
\end{notation}

With this notation, the generalization of Lemma~\ref{euler_lemma} to arbitrary sums is as follows:
\begin{lemma}\label{general_euler_lemma}
Let $X$ be a connected CW complex with $\pi_1(X)=G$. Further, let $\gamma_1,\cdots,\gamma_m$ be loops in
$X$ in free homotopy classes corresponding to conjugacy classes $g_1,\cdots,g_m$. Then
$$\scl(\sum_i g_i) = \inf_S \frac {-\chi^-(S)} {2|n(S)|}$$
where the infimum is taken over all admissible maps $f:S \to X$ wrapping $\partial S$ around each $\gamma_i$ with
degree $n(S)$.
\end{lemma}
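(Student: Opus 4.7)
The plan is to mirror the proof of Lemma~\ref{euler_lemma}, adapted to admissible surfaces with $m$ boundary loops instead of one. One direction is routine; the other requires combining a covering trick with tubing, as in the single-loop case, but with extra care to keep the degrees on all $m$ loops balanced.

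For the inequality $\inf_S -\chi^-(S)/(2|n(S)|) \le \scl(\sum_i g_i)$, I would use a genus-minimizing surface $S_n$ witnessing $\cl(\sum_i g_i^n)$: such an $S_n$ is connected, has exactly $m$ boundary components, with the $i$-th wrapping $\gamma_i$ with degree $n$, so it is admissible with $n(S_n) = n$ and $-\chi^-(S_n) = 2\cl(\sum_i g_i^n) + (m-2)$. Dividing by $2n$ and using that $\cl_n + (m-1)$ is subadditive (so the limit defining $\scl$ of a chain exists), the ratio $-\chi^-(S_n)/(2n(S_n))$ converges to $\scl(\sum_i g_i)$, giving the bound.

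The substantive direction, $\scl(\sum_i g_i) \le \inf_S -\chi^-(S)/(2|n(S)|)$, asks that an arbitrary admissible $f:S \to X$ produce, up to small error, a surface realizing an upper bound on $\cl(\sum_i g_i^N)$ for some large $N$. I would proceed in three steps, paralleling the single-loop proof:
\begin{enumerate}
\item Throw away closed components (this only decreases $-\chi^-$ and leaves $n(S)$ unchanged) and compress any boundary circle mapping to $\gamma_i$ with degree zero (such a boundary is nullhomotopic in $X$, so compression strictly decreases $-\chi^-$ without changing $n(S)$).
\item Apply a monotonization argument in the style of Lemma~\ref{monotone_surface_lemma}: since for each $i$ the degrees of the boundary components mapping to $\gamma_i$ sum to $n(S)$, we may pair components of opposite sign and pass to a finite cyclic cover of $S$ over which every boundary component maps to its respective $\gamma_i$ with a common positive degree. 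This step preserves the ratio $-\chi^-/n(\cdot)$.
\item Pass to a further cyclic cover of large degree $k$ coprime to the relevant boundary counts (so that the number of boundary components per $\gamma_i$ is controlled), and then tube together the boundary components mapping to each $\gamma_i$ using $1$-handles whose image in $X$ is a point. Each $1$-handle increases $-\chi^-$ by $1$ but does not change $n(\cdot)$, and the total number of handles is bounded independent of $k$; so the contribution to the ratio tends to zero as $k \to \infty$, and we are left with a (connected, after possibly one additional round of tubing across different $\gamma_i$) surface having exactly $m$ boundary components, the $i$-th wrapping $\gamma_i$ with the same degree $N$. This surface witnesses $\cl(\sum_i g_i^N) \le \tfrac{1}{2}(-\chi^-(S') - (m-2))$, giving $\scl(\sum_i g_i) \le -\chi^-(S)/(2|n(S)|) + \epsilon$.
\end{enumerate}

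The main obstacle is step (2): in the original lemma there is a single loop $\gamma$, and one only needs to balance one family of boundary degrees, whereas here one must simultaneously equalize the degrees on all $m$ loops while keeping the surface connected. The key is that admissibility already forces $\sum_{j:\, i(j)=i} d_j = n(S)$ for each $i$, so after monotonization the degree data on each $\gamma_i$ can be normalized independently by an appropriate surjection $\pi_1(S) \to \Z/L\Z$ built from the $d_j$'s (as in the proof of Lemma~\ref{monotone_surface_lemma}); the resulting cover automatically has uniform degree $N$ on every boundary component, so the subsequent tubing in step (3) produces exactly the monotone $m$-boundary surface needed to bound $\cl(\sum_i g_i^N)$.
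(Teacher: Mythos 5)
Your proof is correct and matches what the paper has in mind: the paper simply remarks that the argument is almost identical to Lemma~\ref{euler_lemma} combined with the monotonization of Lemma~\ref{monotone_surface_lemma} (deferring details to \cite{Calegari_scl}~\S~2.6.1), and your three-step outline --- discarding degenerate pieces, equalizing boundary degrees per $\gamma_i$ via a cyclic cover built from a balanced surjection $\pi_1(S)\to\Z/N\Z$, then a large coprime cover plus tubing to make the ratio converge --- fills in exactly those details.
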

The proof is almost identical to that of Lemma~\ref{euler_lemma}. Moreover, one may restrict attention
to monotone admissible maps, by the argument of Lemma~\ref{monotone_surface_lemma}.
For details, see \cite{Calegari_scl}, \S~2.6.1.

The function $\scl$ as above can be extended to integral group $1$-chains.
From Lemma~\ref{general_euler_lemma} follow equalities
$$\scl(g + g^{-1} + \sum g_i) = \scl(\sum g_i)$$
and
$$\scl(g^n + \sum g_i) = \scl(\underbrace{g + \cdots + g}_n + \sum g_i)$$
valid for any $g,g_i$ and any non-negative integer $n$. Hence one may define
$$\scl(\sum n_ig_i):=\scl(\sum g_i^{n_i})$$
for any integers $n_i$ (not necessarily non-negative) and elements $g_i \in G$ and observe that the result
is well-defined on integral group $1$-chains, and is subadditive under addition of chains.
Consequently, $\scl$ can be extended to rational chains by linearity, and to real chains by
continuity. See \cite{Calegari_scl}, \S~2.6.1.

Denote the vector space of real (group) $1$-chains on $G$ by $C_1(G)$ and $1$-boundaries by $B_1(G)$
(or just $C_1$ and $B_1$ if $G$ is understood).
The expression $\scl(\sum t_i g_i)$ is finite if and only if $\sum t_i g_i \in B_1$.
Bavard duality holds in the broader context of arbitrary real $1$-boundaries, 
and with essentially the same proof. The statement is:
\begin{theorem}[Generalized Bavard Duality \cite{Calegari_scl} \S~2.6.2]\label{general_duality_theorem}
Let $G$ be a group. For any finite set of elements $g_i \in G$ and numbers $t_i \in \R$ for which
$\sum t_i g_i \in B_1$ there is an equality
$$\scl(\sum t_i g_i) = \frac 1 2 \sup_{\phi \in Q(G)} \frac {\sum t_i \phi(g_i)} {D(\phi)}$$
\end{theorem}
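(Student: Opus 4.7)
The plan is to establish the two inequalities separately. The inequality $\scl(c) \geq \frac{1}{2} \sum_i t_i \phi(g_i)/D(\phi)$ for $c = \sum_i t_i g_i \in B_1$ and $\phi \in Q(G)$ will follow from a defect calculation on admissible surfaces. The reverse inequality, producing an extremal quasimorphism for each $c$, will be a Hahn--Banach argument on the seminormed space $(B_1^H, 2\scl)$, modeled directly on the original proof of Theorem~\ref{duality_theorem}.

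For the easy direction, by continuity of both sides on finite dimensional subspaces and homogeneity under positive scaling, I reduce to $t_i \in \Z_{\geq 0}$. Given a monotone admissible $f: S \to X$ with $S$ connected of genus $g$ and boundary components $\partial_1, \ldots, \partial_b$, where $\partial_j$ wraps $\gamma_{i(j)}$ at degree $d_j$ and $\sum_{j: i(j) = i} d_j = n(S) t_i$, I apply $\phi$ to the surface relation $\prod_{j=1}^b \partial_j = \prod_{k=1}^g [a_k, b_k]$ pushed forward to $G$. Homogeneity converts $\sum_j \phi(\partial_j)$ to $n(S) \phi(c)$; subadditivity together with the standard bound $|\phi([a,b])| \leq D(\phi)$ yields $|\phi(\prod_j \partial_j) - \sum_j \phi(\partial_j)| \leq (b-1) D(\phi)$ and $|\phi(\prod_k [a_k,b_k])| \leq (2g-1) D(\phi)$. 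Combining gives $|n(S)\phi(c)| \leq (2g + b - 2) D(\phi) = -\chi^-(S) \cdot D(\phi)$. Summing over components (disks and spheres contributing nothing) and dividing by $2 n(S)$, the infimum formula of Lemma~\ref{general_euler_lemma} completes this direction.

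For the hard direction, on the line $\R \cdot [c] \subset B_1^H$ define $\lambda(\alpha [c]) = 2\alpha \scl(c)$, which is bounded by the seminorm $2\scl$. By Hahn--Banach, extend $\lambda$ to a linear functional $L: B_1^H \to \R$ with $|L| \leq 2\scl$. Pulling $L$ back to $B_1$ and extending arbitrarily to a linear functional on $C_1 = \R[G]$ produces a function $\phi: G \to \R$. Since $L$ vanishes on the subspace $H$ containing $g^n - ng$ and $g - hgh^{-1}$, the resulting $\phi$ is automatically homogeneous and a class function. To bound the defect, observe that the chain $gh - g - h = -\partial[g|h] \in B_1$ is rationally bounded at degree $1$ by a pair of pants whose three boundary circles map to $\gamma_g, \gamma_h, \gamma_{(gh)^{-1}}$ (since $g \cdot h \cdot (gh)^{-1} = 1$); this pair of pants has $-\chi^- = 1$, so $\scl(gh - g - h) \leq 1/2$. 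Hence $|\phi(gh) - \phi(g) - \phi(h)| = |L(gh - g - h)| \leq 1$, and $\phi \in Q(G)$ with $D(\phi) \leq 1$. By construction $\phi(c) = L(c) = 2\scl(c)$, yielding the desired bound.

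The crux is the defect estimate in the Hahn--Banach argument: the constant $\tfrac{1}{2}$ in the theorem ultimately comes from the pair-of-pants calculation $\scl(gh - g - h) \leq 1/2$, which is the chain-level analogue of the classical observation $\scl([g,h]) \leq 1/2$ used in Bavard's original proof. The ambiguity in the arbitrary linear extension of $L$ from $B_1$ to $C_1$ is harmless, since all such extensions agree on $B_1$, and in particular agree at $c$ and on all defect-controlling chains $gh - g - h$.
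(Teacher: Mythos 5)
Your overall plan --- the defect estimate from the surface group relation in one direction, and Hahn--Banach plus the pair-of-pants bound $\scl(gh - g - h) \le 1/2$ in the other --- is exactly the argument that the cited reference (\cite{Calegari_scl} \S~2.6.2) carries out; the paper itself does not reprove the theorem. The Hahn--Banach direction is done correctly, and the key point you make at the end is the right one: homogeneity, conjugation-invariance, and the defect bound for $\phi$ are all extracted by evaluating the extended functional on chains ($g^n - ng$, $g - hgh^{-1}$, $gh - g - h$) that already lie in $B_1$, so the arbitrary extension from $B_1$ to $C_1 = \R[G]$ costs nothing.

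There is a genuine error in the easy direction, however, which surfaces precisely because this generalized version allows surfaces with several boundary components. Your intermediate estimate $|\phi(\prod_k [a_k,b_k])| \le (2g-1)D(\phi)$ has negative right-hand side when $g = 0$, and if one instead uses the correct value $\phi(1) = 0$ there, your combination yields only $|\sum_j \phi(\partial_j)| \le (b-1)D(\phi)$, which is off by one from the required $-\chi^-(S_l)\,D(\phi) = (b-2)D(\phi)$. Since extremal and near-extremal surfaces for a sum $\sum t_i g_i$ are frequently planar, this case cannot be discarded. (In the original single-element Bavard duality one can take $S$ connected with one boundary component, forcing $g \ge 1$, which is why the issue does not arise there.) The fix is to isolate one boundary term rather than bounding the two products independently: from $\partial_b = \bigl(\partial_1 \cdots \partial_{b-1}\, \prod_k [a_k,b_k]\bigr)^{-1}$ together with homogeneity and subadditivity on the product of $b-1+g$ terms, one gets
$$\Bigl|\sum_{j=1}^b \phi(\partial_j) + \sum_{k=1}^g \phi([a_k,b_k])\Bigr| \le (g+b-2)\,D(\phi),$$
and then $|\phi([a_k,b_k])| \le D(\phi)$ gives $|\sum_j \phi(\partial_j)| \le (2g+b-2)\,D(\phi)$ uniformly for all $g \ge 0$, $b \ge 1$ with $2g+b \ge 2$. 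With this repair, your proof is complete and matches the standard argument.
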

Any homogeneous quasimorphism vanishes identically on any chain of the form $g^n - ng$ or
$g - hgh^{-1}$ for $g,h\in G$ and $n \in \Z$. Define $H$ to be the subspace of $B_1$ spanned by
such chains. Then $\scl$ descends to a pseudo-norm on $B_1(G)/H$ (hereafter denoted $B_1^H$).
When $G$ is hyperbolic, $\scl$ is a {\em norm} on $B_1^H$; this is Corollary~3.57 from
\cite{Calegari_scl}, restating work of \cite{Calegari_Fujiwara} (this fact is logically superfluous for
the results of this paper).

In fact, the reader can take Lemma~\ref{euler_lemma} and Lemma~\ref{general_euler_lemma} as 
the {\em definition} of $\scl$. This is the point of view we shall take in the sequel.

\section{Free groups}\label{free_group_section}

This section contains the proof of the Rationality Theorem for free groups. 
The main result is that $\scl$ is a piecewise rational linear function on $B_1^H(F)$, for $F$
a free group. A similar statement holds for some groups derived in simple ways from free groups.
Throughout this section, we use Lemma~\ref{euler_lemma} and Lemma~\ref{general_euler_lemma} 
as an operational definition of $\scl$.

\subsection{Handlebodies and arcs}

In this section, for convenience, we use some language and basic facts from elementary
$3$-manifold topology; for a reference, see \cite{Hempel}.
In the sequel, let $F$ denote a free group of some fixed rank and let $H$ denote a handlebody
of genus equal to $\rank(F)$. 
As in Figure~\ref{handledec} (illustrating the case of $\rank(F)=4$), 
we consider a system of compressing disks $D_i$
which decompose $H$ into $\rank(F)-1$ components, each of which retracts down to one
of $\rank(F)-1$ compressing disks $E_j$. Denote the union of the $D_i$ by $\D$ and the
union of the $E_j$ by $\E$.

\begin{figure}[htpb]
\labellist
\small\hair 2pt
\pinlabel $E_1$ at 0 370
\pinlabel $E_2$ at -20 260
\pinlabel $E_3$ at 0 150
\pinlabel $D_1$ at 180 520
\pinlabel $D_2$ at -15 315
\pinlabel $D_4$ at -15 200
\pinlabel $D_3$ at 375 315
\pinlabel $D_5$ at 375 200
\pinlabel $D_6$ at 180 -15
\endlabellist
\centering
\includegraphics[scale=0.4]{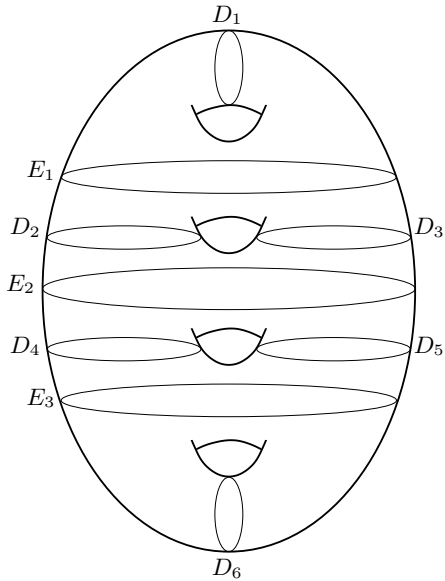}
\caption{The decomposing disks $E_i$ and $D_j$ for $g=4$} \label{handledec}
\end{figure}

Given a conjugacy class in $F$, we construct a representative loop in the corresponding
free homotopy class in $H$ of a simple kind. Such a representative
will be made up of certain kinds of arcs, which we call {\em horizontal} and {\em vertical},
and which are defined as follows.

\begin{definition}
A {\em horizontal} arc is an embedded arc $\alpha:I \to H$ whose image is contained in some $E_i$.
A {\em vertical} arc is an arc $\alpha:I \to H$ which is properly embedded in the complement
of $\E$, and which intersects some $D_j$ transversely in one point.
\end{definition}

Note that any two horizontal arcs with the same endpoints are homotopic rel. endpoints
through horizontal arcs. Moreover, any two vertical arcs whose endpoints are contained in the
same $E_i$ are properly homotopic through vertical arcs; call arcs which differ from each
other by such homotopies {\em equivalent}.

\begin{remark}\label{branch_remark}
If one does not want the psychological convenience of working in a manifold, one can substitute in place of
$H$ a union of $\rank(F)$ solid tori $H_i$, each with a marked disk $E_i$ in their boundary, and glue the tori up by
identifying the $E_i$ with a single disk $E$ by homeomorphisms. The resulting space is a manifold away from 
the disk $E$. In the case $\rank(F)=2$, the two approaches are equivalent. 
\end{remark}

Dual to the system $\D$ of compressing disks there is a graph $\Gamma$ with one vertex for
each component of $H - \D$ and one edge for each $D_i$ in $\D$. There is an isomorphism
$\pi_1(\Gamma) \cong \pi_1(H) \cong F$. The universal cover $\til{\Gamma}$ of $\Gamma$ is a tree
(for an introduction to trees in geometric group theory, see \cite{Serre_trees}, especially Chapter~1).
Every element in $F$ acts on the tree $\til{\Gamma}$ with a unique axis; this axis covers
a closed loop in $\Gamma$. Each arc in $\Gamma$ corresponds to a unique equivalence class of vertical
arc in $H$. So to each conjugacy class of element $g \in F$ is associated a (cyclically ordered)
sequence of (equivalence classes of) vertical arcs in $H$. If two consecutive vertical arcs
are on opposite sides of some $E_i$ in $\E$, they can be homotoped (rel. endpoints) 
until they have a common endpoint
in $E_i$, and their union is transverse to $E_i$ at that point. If two consecutive vertical arcs are
on the same side of some $E_i$ in $\E$, their endpoints in $E_i$ can be joined by a horizontal arc
in $E_i$. In this way, a conjugacy class in $F$ determines a loop $\gamma$ in $H$, unique up to
equivalence, made up of vertical and horizontal arcs. Say that such a $\gamma$ is in
{\em bridge position}.

\begin{figure}[htpb]
\centering
\includegraphics[scale=0.4]{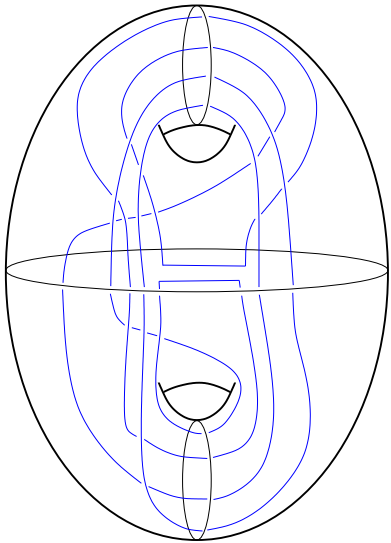}
\caption{A loop in bridge position representing the element $ababa^{-2}b^{-2}$ in $F_2$.
There are eight vertical arcs (one for each letter) and two horizontal arcs (one for
each ``double letter''). This loop happens to be embedded in $H$, but the isotopy
class of the loop is not significant, just its homotopy class.} \label{bridgeloop}
\end{figure}

If $g_1,g_2,\cdots, g_m$ is a family of elements in $F$, then a family of loops 
$\gamma_1,\cdots,\gamma_m$ in $H$ representing the conjugacy classes of the $g_i$ is
in bridge position if each loop individually is in bridge position.

See Figure~\ref{bridgeloop} for an example of a loop in bridge position.
Since each horizontal or vertical arc has distinct endpoints, a circle in bridge position decomposes
into at least two arcs. We may (and do) assume without loss of generality that any family of
circles in bridge position is actually embedded in $H$. However, it is very important to note that
the {\em isotopy class} of the loop is {\em not} important; all that matters is its {\em homotopy class}.

\begin{remark}
A system $\D$ and $\E$ of compressing disks for a handlebody determines a generating set for $F$ as a groupoid,
whose generators are equivalence classes of vertical arcs. If we use in place of a handlebody the
space described in Remark~\ref{branch_remark}, vertical arcs correspond to generators for $F$ as a group.
\end{remark}

\subsection{Polygons and rectangles}

Now, let $g \in F$ be a conjugacy class in $[F,F]$, and let $\gamma$ be a loop in bridge position
representing $g$. Let $Z \subset \E$ be the union of the endpoints of the horizontal
and vertical arcs in $\gamma$ (remember that for convenience we have assumed that $\gamma$ is embedded). 
Note that $Z$ is a finite set.
Let $f:S \to H$ be a map of a surface whose (possibly multiple) boundary components
wrap some number of times around $\gamma$. By Lemma~\ref{monotone_surface_lemma}, we may assume that
$f$ is monotone, so that each component of $\partial S$ maps over $\gamma$ with positive degree.
This is not strictly necessary for what follows, but it simplifies some arguments.
We will gradually adjust $f$ and $S$,
never changing $n(S)$ or increasing $-\chi^-$, until the end result is built up from a
finite number of simple ``pieces''.

Since by hypothesis $f$ is monotone, first adjust $f$ by a homotopy so that the restriction
of $f$ to each boundary component is an orientation-preserving covering map $\partial_i \to \gamma$. Next
put $f$ into general position (rel. $\partial S$) with respect to the
disks $\D$.

Since $f$ is in general position, and since $\gamma$ is transverse to $\D$, 
the preimage $f^{-1}(\D)$ is a union of disjoint embedded loops and proper arcs. Since $f$
restricted to $\partial S$ is an immersion, every arc of $f^{-1}(\D)$ is essential.
Let $\alpha$ be a component of $f^{-1}(\D)$ in $S$. If $\alpha$ is an (innermost)
inessential loop, then $\alpha$ can be pushed off $\D$ by a homotopy, 
reducing the number of components of $f^{-1}(\D)$. If $\alpha$ is an
essential loop, then $S$ can be compressed along $\alpha$, and the compressing disks
can be mapped to the component of $\D$ containing $f(\alpha)$. This does not change $n(S)$
but reduces $-\chi^-$. After finitely many such compressions and homotopies, we can
assume that $f^{-1}(\D)$ is a union of disjoint embedded proper essential arcs. In short, we
have proved the following ``preparation lemma'':

\begin{lemma}\label{tight_lemma}
Let $f:S \to H$ monotone be given. Then after possibly replacing $f,S$ with $f',S'$ satisfying
$-\chi^-(S') < -\chi^-(S)$ and $n(S') = n(S)$, we can assume that $f^{-1}(\D)$ is a union
of disjoint embedded proper essential arcs.
\end{lemma}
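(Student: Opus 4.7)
The plan is to put $f$ into general position with respect to the $2$-dimensional submanifold $\D \subset H$, then iteratively simplify $f^{-1}(\D)$ using standard $3$-manifold moves (innermost disk and compression), arguing that each move either reduces the complexity of $f^{-1}(\D)$ or reduces $-\chi^-(S)$, while never changing $n(S)$.

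First I would homotope $f$ so that the restriction to each boundary component is an orientation-preserving covering $\partial_i \to \gamma$ (using monotonicity), and so that $f$ is transverse to $\D$ rel.\ $\partial S$. General position guarantees that $f^{-1}(\D)$ is a disjoint union of embedded circles in the interior of $S$ together with properly embedded arcs meeting $\partial S$ transversely. Since $\gamma$ is transverse to $\D$ (each vertical arc crosses a unique $D_j$ exactly once) and $f|_{\partial S}$ is an immersion covering $\gamma$, every arc of $f^{-1}(\D)$ meeting $\partial S$ is automatically essential in $S$.

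Next I would eliminate the circle components. For an innermost inessential circle $\alpha \subset f^{-1}(\D)$, $\alpha$ bounds a disk $D' \subset S$ disjoint from the rest of $f^{-1}(\D)$. Then $f(\alpha)$ lies in some component $D_i$ of $\D$, and since $D_i$ is itself a disk (hence simply connected), $f(\alpha)$ bounds a disk in $D_i$. I can then homotope $f|_{D'}$ across $D_i$ to push $D'$ off $\D$, strictly reducing the number of components of $f^{-1}(\D)$ without altering $f|_{\partial S}$ or $n(S)$. For an essential circle $\alpha \subset f^{-1}(\D)$, I would compress: cut $S$ along $\alpha$ and cap off with two disks, mapping each new disk into the subdisk of $D_i$ bounded by $f(\alpha)$. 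This produces $f':S' \to H$ with $n(S') = n(S)$ (since nothing changes near $\partial S$) and $-\chi^-(S') \le -\chi^-(S)$, because compression along an essential curve increases $\chi$ by $2$, so $-\chi^-$ is non-increasing. Throwing away any closed disk or sphere components produced in the process preserves $n$ and only decreases $-\chi^-$.

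Each of these moves either strictly decreases the number of components of $f^{-1}(\D)$ or strictly decreases $-\chi^-(S)$; since both quantities are bounded below by $0$, the process terminates, at which point $f^{-1}(\D)$ is a disjoint union of embedded proper essential arcs, as desired. The main subtlety is ensuring the compression step is actually permissible and doesn't affect $n(S)$: the key observation is that $\D$ is a union of disks, so any essential loop in $S$ mapping into $\D$ automatically maps into a simply connected piece and can be capped off inside $\D$; and since all modifications occur in the interior of $S$ away from a collar of $\partial S$, the degree with which $\partial S$ wraps $\gamma$ is unchanged throughout.
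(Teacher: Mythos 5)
Your argument follows essentially the same route as the paper: homotope $f$ so that it restricts to an orientation-preserving covering on $\partial S$, put it in general position rel $\partial S$ with respect to $\D$, observe that arcs of $f^{-1}(\D)$ are automatically essential, push inessential circle preimages off $\D$ by innermost-disk homotopies, and compress essential circle preimages by mapping the compressing disks into the relevant disk of $\D$. One small point worth tightening: where you write $-\chi^-(S')\le -\chi^-(S)$ after a compression, the inequality is in fact strict, as the lemma statement requires---in this setting no disk (or sphere) components can be created, since any disk component would have a boundary circle covering the homotopically nontrivial loop $\gamma$ with nonzero degree, which is impossible.
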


Note that distinct components of $f^{-1}(\D)$ might be (and typically will be) parallel in $S$, 
especially for complicated $\gamma$. Let $\RR$ be a regular neighborhood of $f^{-1}(\D)$ in
$S$, so that $\RR$ consists of a union of disjoint embedded proper essential rectangles. By general
position we can take $\RR$ to be equal to the inverse image (under $f^{-1}$) of an open tubular
neighborhood $N(\D)$ of $\D$. Now, there is a deformation retraction of pairs
$H - N(\D), \gamma \cap (H-N(\D))$ to $\E,\gamma \cap \E$. This deformation retraction can
be extended to a map $r:H \to H$ which restricts to a homotopy equivalence of pairs from 
$N(\D),\gamma \cap N(\D)$ to $H-\E, \gamma \cap (H-\E)$. So after composing $f$ with such a retraction
$r$ we get a new map, which by abuse of notation we also denote by $f$, homotopic to the old
map $f$, such that $\RR = f^{-1}(H-\E)$. Each component $\beta$ of $\partial S - \RR$ 
either maps by $f$ to a point in $Z$,
or to a horizontal arc in $\gamma$. In the first case, we collapse $\beta$ to a point in $S$ by
a homotopy equivalence, so we assume without loss of generality that every arc of $\partial S - \RR$
is a horizontal arc. See Figure~\ref{retract}.

\begin{figure}[htpb]
\centering
\includegraphics[scale=0.4]{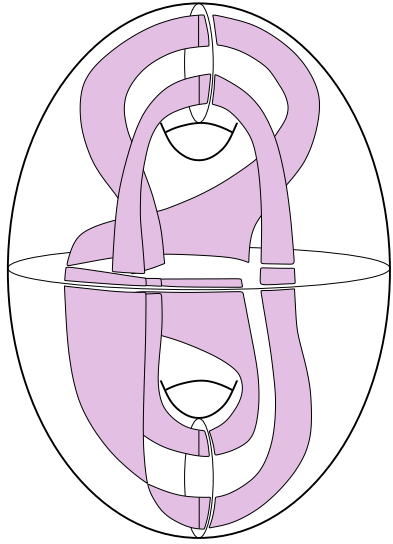}
\caption{After retracting $H-N(\D)$ to $E$, the preimage
$f^{-1}(H-E)$ consists of a union $\RR$ of rectangles.} \label{retract}
\end{figure}

Let $P$ be a component of $S - \RR$. Then $f$ maps $P$ to some component $E_i$ of $E$.
If $P$ is not a disk, then it contains an essential simple loop $\gamma$. Since
$E_i$ is a disk, $f$ maps $\gamma$ to a homotopically trivial loop in $E_i$,
so we can compress $S$ along $\gamma$, mapping the compressing disks to $E_i$,
to get a new surface $S'$ with
$-\chi^-(S') < -\chi^-(S)$ and $n(S') = n(S)$. So without loss of generality we can assume that
each component $P$ is a disk. In fact, $P$ inherits the structure of a polygon, whose edges are
arcs of the boundary of components of $\RR$, and horizontal arcs. The vertices of $P$ map by
$f$ to $Z$.

\begin{lemma}\label{embedded_vertices}
With notation as above, after possibly replacing $f,S$ with $f',S'$ satisfying
$-\chi^-(S') < -\chi^-(S)$ and $n(S') = n(S)$ we can assume that for each polygon $P$
of $S - \RR$ the image of the vertices of $P$ under $f$ are {\em distinct} elements of $Z$.
\end{lemma}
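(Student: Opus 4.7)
The plan is to iterate a surgery procedure: whenever two vertices of some polygon $P$ share an image in $Z$, perform a surgery on $S$ along an arc in $P$ that strictly decreases $-\chi^-(S)$ while preserving $n(S)$. Since $-\chi^-$ is a non-negative integer, the process must terminate, at which point no such coincidences remain.

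Suppose $v_1 \neq v_2$ are vertices of a polygon $P$ of $S - \RR$ with $f(v_1) = f(v_2) = z \in Z$. Choose an embedded arc $\alpha \subset P$ from $v_1$ to $v_2$ whose interior lies in the interior of $P$. Since $P$ is a disk mapping into the disk $E_i \subset \E$, the image $f \circ \alpha$ is a loop at $z$ in $E_i$, hence null-homotopic rel endpoints. Homotope $f$ in a small collar of $\alpha$ so that $f|_\alpha \equiv z$; this modification is supported in the interior of $P$ and leaves $f$ unchanged on the other polygons and rectangles.

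Now cut $S$ along $\alpha$ to form $S'$, with induced map $f':S'\to H$. The two new boundary arcs (the copies of $\alpha$) map to the constant point $z$, so $n(S') = n(S)$; moreover $\chi(S') = \chi(S) + 1$. The crux of the argument is to show that no disk component of $S'$ is created, for then $-\chi^-(S') = -\chi^-(S) - 1$. Suppose a disk component $D$ of $S'$ existed: its boundary $\partial D$ consists of sub-arcs of $\partial S$ joined by images of $\alpha$, all of which collapse to $z$. Each sub-arc of $\partial D$ joins two distinct preimages of $z$ on some boundary circle $c$ of $S$; monotonicity of $f$ together with $v_1\ne v_2$ forces $f|_c$ to be a cyclic cover of $\gamma$ of degree at least $2$, so each such sub-arc maps to $\gamma$ with positive integer degree. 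Thus $f'|_{\partial D}$ represents $\gamma^d$ for some integer $d \geq 1$. Since $D$ is a disk in $S'$, $f'|_{\partial D}$ is null-homotopic in the handlebody $H$, giving $g^d = 1$ in $F$. This contradicts the torsion-freeness of $F$, provided $g \neq 1$ (which we may always assume, since otherwise $\scl(g)=0$ and the lemma is vacuous).

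The main obstacle is this disk-exclusion step: it is here, and essentially only here, that the torsion-free structure of the free group $F$ is used to ensure the surgery genuinely reduces $-\chi^-$ at every iteration. The rest of the argument is purely topological, requiring only that $\alpha$ can be chosen in the disk $P$ and that the nullhomotopy $f|_\alpha\equiv z$ is compatible with the existing rectangle decomposition.
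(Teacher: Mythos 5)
Your proof is correct and takes essentially the same route as the paper's. The paper performs the surgery as ``identify $u$ to $v$, cut along the resulting loop $\beta$, glue in two disks,'' which in its Remark it reinterprets as ``add a $1$-handle from $u$ to $v$ mapping trivially, then compress the loop over the core.'' Your direct ``cut along $\alpha$ after homotoping $f|_\alpha$ to the constant map $z$'' is the same operation: both increase $\chi$ by exactly $1$ and produce the same new boundary structure, so the resulting surfaces agree. The one substantive thing you add that the paper leaves tacit is the verification that no disk component is created: the paper's Remark asserts ``compression reduces $-\chi^-$ by $2$'' without noting that this requires excluding disks, whereas you explicitly invoke monotonicity of $f|_{\partial S}$ to see that every new boundary circle of $S'$ represents $\gamma^d$ with $d\geq 1$, and then asphericity of $H$ together with torsion-freeness of $F$ to rule out a disk with such boundary; this is the right (and implicitly intended) justification. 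Two small quibbles: your claim that $f|_c$ must be a cyclic cover ``of degree at least $2$'' is only valid when $v_1$ and $v_2$ lie on the same boundary circle and is not actually needed---what you use is simply that each subarc of $\partial S$ in $\partial D$ is nonconstant, has both endpoints mapping to $z$, and is an orientation-preserving cover of $\gamma$, hence has degree $\geq 1$; and in the case where $v_1,v_2$ lie on distinct boundary circles of $S$, the subarcs of $\partial D$ run from a copy of $v_1$ back to a copy of $v_1$ (not from $v_1$ to $v_2$), though the degree argument goes through unchanged.
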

\begin{proof}
Suppose that $P$ is a polygon mapping under $f$ to the disk $E$, and let $u,v$ be distinct vertices
of $P$ mapping to the same point $y \in Z \cap E$. Let $\beta$ be an embedded proper arc in $P$
from $u$ to $v$. Let $S''$ be obtained from $S$ by identifying $u$ to $v$, then cut $S''$ along
the loop $\beta$, and glue in two disks. The restriction of $f$ to $S'' - \beta$ extends to
these disks, since $E$ is contractible. The result is a new surface $S'$ as in the statement of the
Lemma.
\end{proof}

\begin{remark}
The argument of Lemma~\ref{embedded_vertices} may be summarized by saying that $S'$ is obtained
from $S$ by first adding an oriented $1$-handle from $u$ to $v$ which maps trivially under $f$, and then compressing
the trivial embedded loop that runs over the core of this $1$-handle. Adding a $1$-handle increases $-\chi^-$ by $1$, but
doing a compression reduces it by $2$. These operations are uniquely defined up to homotopy, and keep the
surface oriented and the map monotone. Notice that $S$ and $S'$ might have different numbers of boundary
components. Call the result of this whole operation a {\em boundary compression}.
\end{remark}

\subsection{Simple branched surfaces}

The core of each component of $\RR$ runs between two vertical arcs of
$\gamma$, after mapping by $f$. There are only finitely many combinatorial types of such rectangles;
an upper bound is the number of pairs of vertical arcs in $\gamma$ which intersect the same component
of $\D$. In fact, since $S$ is monotone, the two (oriented) arcs of $\gamma$ in the boundary of each 
rectangle must run over a handle of $H$ in opposite directions.
Similarly, there are only finitely many polygon types $P$, since each is determined by
a cyclically ordered subset of $Z \cap E_i$ for some $E_i$. 

\medskip

It is convenient to introduce the language of branched surfaces in what follows. We give
cursory definitions below to standardize terminology, but the reader who is not familiar
with branched surface should consult e.g.\ \cite{Calegari_foliations} Chapter~6, \S~6.3, or 
\cite{Mosher_Oertel} \S~1 for precise definitions.

A {\em branched surface} is a finite, smooth $2$-complex obtained from a finite
collection of smooth surfaces by gluing compact subsurfaces.
The set of non-manifold points of a branched surface $B$ is called
the {\em branch locus}, and denoted $C(B)$. A branched surface in which the branch locus is a $1$-manifold is
called a {\em simple branched surface}. In this paper we are only interested in
simple branched surfaces. One can also consider branched surfaces with
boundary; the boundary of a branched surface is a {\em train-track} (see e.g.\ \cite{Harer_Penner}).
The components of $B-C(B)$ are called the {\em sectors} of the branched surface. A branched surface
is oriented if the sectors can be oriented in such a way that the orientations are compatible
along $C(B)$. In a simple branched surface, several distinct sectors locally bound
a component of the branch locus from either side; see Figure~\ref{local_models} for an
example of the local model.
\begin{figure}[htpb]
\centering
\includegraphics[scale=0.5]{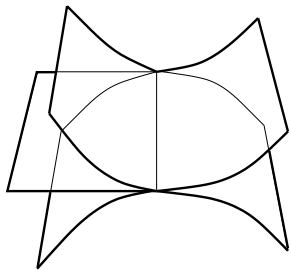}
\caption{An arc of the branch locus with two sectors on one side and three sectors
on the other} \label{local_models}
\end{figure}
A {\em weight} is a linear function $w$ from the sectors of $B$ to $\R$ satisfying the
{\em gluing conditions}: along each component of $C(B)$, the weights of the sectors on each
side sum to the same value. 

A branched surface $B$ has a well-defined tangent bundle, even along the branch locus,
so it makes sense to say that a map from a surface into $B$ is an immersion.
A proper immersion $f:S \to B$ from an oriented surface to an oriented branched surface $B$ is said to
be a {\em carrying map}, and $S$ is said to be {\em carried} by $B$. A carrying map determines
a weight $w$, where $w(\sigma)$ is the cardinality of $f^{-1}(p)$ for $p$ a point in the interior of
a sector $\sigma$. Given a non-negative integral weight $w$, we say that a carrying map $f:S \to B$
{\em realizes} $w$ if the weight associated to $S$ is $w$. Note that $S$ is not uniquely determined
by its weight in general. 

For an abstract branched surface,
not every non-negative integral weight is realized, but for a {\em simple} branched surface,
every non-negative integral weight is realized. This is a somewhat subtle point, so we make
a few clarifying remarks. Let $w$ be a weight. An edge $e$ of $C(B)$ has sectors
$\sigma_1,\cdots,\sigma_m$ on one side, and $\sigma_{m+1},\cdots,\sigma_n$ on the
other. Take $w(\sigma_i)$ copies
of $\sigma_i$ for each $i$. The gluing condition along $e$ is the equality
$$\sum_{i=1}^m w(\sigma_i) = \sum_{i=m+1}^n w(\sigma_i)$$
Hence there is a bijection between the set of copies of the $\sigma_1,\cdots,\sigma_m$ 
and the set of copies of the $\sigma_{m+1},\cdots,\sigma_n$. 
After choosing such a bijection, these copies can be glued up along copies of
$e$ to build the surface $S$ near $e$. 
For a general branched surface, there is a holonomy problem at the triple points of $C(B)$:
around each triple point, the holonomy must be trivial, or else the surface constructed will
map to $B$ with branch points. But if $B$ is simple, there are no such triple
points, and the holonomy problem goes away. This concludes our summary of the theory of branched
surfaces.

\begin{remark}
If $B$ is an {\em embedded} branched surface (possibly with triple points) in a $3$-manifold $M$,
the local transverse order structure canonically solves the holonomy problem: associated
to each weight there is a unique {\em embedded} surface contained in a tubular neighborhood of
$B$. The requirement that the surface be embedded makes the bijections along edges canonical,
and therefore the holonomy around a triple point is necessarily trivial. A similar phenomenon
occurs in normal surface theory: a vector of weights satisfying the gluing equalities
and inequalities determines a unique embedded normal surface. When one tries to do immersed
normal surface theory in $3$-manifolds, the holonomy problem reasserts itself and the
situation becomes very tricky; see \cite{Rannard} for a discussion of some of the
phenomena which arise in the theory of immersed normal surfaces. 
\end{remark}

We construct a branched surface $B$ as follows. $B$ is obtained by taking a disjoint 
copy of each combinatorial type
of (oriented) marked polygon $P$, and a copy of each combinatorial type of (oriented) rectangle $R$
as above, and gluing them along their common oriented edges. There is a unique way to choose the smooth structure along
$C(B)$ compatibly with the orientations. Notice that each marked polygon is glued
to exactly one rectangle along each non-horizontal edge, but each rectangle is typically glued
to several polygons along each of two edges. Hence a weight is determined by its values on polygon sectors
(cf.\ quadrilateral co-ordinates in normal surface theory). The resulting branched surface is special in
a few ways, of which we take note:
\begin{enumerate}
\item{It is {\em oriented}, since it is obtained by gluing oriented pieces in a
manner compatible with their orientations.}
\item{The branch locus consists of a finite union of embedded arcs along which a rectangle
is glued to several possible polygons. In particular, the branch locus contains no triple points
and is {\em simple}.}
\item{The branched surface has boundary which is an oriented train-track. Every surface $S$ carried by $B$
has boundary $\partial S$ which is carried by $\partial B$.}
\item{The branched surface admits a tautological immersion $\iota:B \to H$, by choosing a map for
each rectangle and polygon type and gluing these maps together along the edges. The restriction of
$\iota$ to each train-track component of $\partial B$ is an oriented immersion to $\gamma$}
\end{enumerate}

For each polygon $P$ let $s(P)$ denote the number of edges, and $h(P)$ the number of horizontal edges.
Note that $s(P) \ge 2$ and $h(P)\le s(P)/2$, since a pair of adjacent edges of a polygon
cannot both be horizontal.
Furthermore, if $s(P)=2$ then $h(P)=0$. Let $g:S \to H$ be decomposed into polygons and rectangles.
Each rectangle contributes $0$ to $\chi(S)$, and each polygon $P$ contributes
$$\chi(P) = \frac {-(s(P) - 2 - h(P))} 2$$
This can be seen by giving each rectangle and polygon a singular foliation tangent to $\partial B$ and
transverse to $C(B)$, and using the Hopf-Poincar\'e formula.

In particular, the contribution of each polygon to $\chi(S)$ is {\em nonpositive}, and therefore
$-\chi^-(S)$ is a {\em linear function} of the number and kinds of rectangles and polygons that make it up.

In summary, the results of the previous sections show that for any monotone $f:S \to H$, after possibly replacing
$S$ by a new surface with smaller $-\chi^-$ and the same $n(S)$, we can homotope $f$ so that
it factors through a carrying map to $B$, and determines a non-negative integral weight.
Conversely, since $B$ is simple, every non-negative integral weight on $B$ corresponds to
a surface $S$ as above (possibly not unique), and $-\chi^-$ is a linear function of the values
of $w$ on the branches of $B$. Notice that $-\chi^-$ depends only on $w$, though the
topology of $S$ might not.

\medskip

Let $W$ denote the (finite dimensional) real vector space of weights on $B$, and $W^+$ the subspace
of nonnegative weights. Moreover, let $W(\Q)$ (resp. $W^+(\Q)$ )
denote the subset of weights with rational (resp. non-negative rational) coefficients. Then
$-\chi^-$ is a linear function on $W$ which is non-negative on $W^+$, and takes rational values
on $W(\Q)$. We abbreviate this by saying that $-\chi^-$ is a {\em rational linear function}.

Let $V$ be the subspace of $W$ spanned by $W^+$ (note that this subspace is not necessarily
equal to $W$). Then $V$ is a rational subspace of $W$, since it is spanned by finitely many
rational vectors. There is a rational linear function $\partial: V \to \R$ defined as follows.
Given a positive integral weight $w$, let $S$ be a surface carried by $B$ associated to $w$.
Then define
$$\partial(w) = n(S)$$
and extend by linearity to $V$. Notice that $\partial(w)$ does not depend on the choice of $S$, but only on the
induced weight on the train-track $\partial B$.

The inverse $\partial^{-1}(1) \cap W^+$ is a rational polyhedron.
Moreover, by construction, there is an equality
$$\scl(g) = \inf_{w \in \partial^{-1}(1) \cap W^+} \frac {-\chi^-(w)} 2$$
Since $-\chi^-$ is non-negative on $W^+$, this infimum is {\em realized}, and the
set of points which realize the infimum is itself a rational polyhedron. 
If $w$ is an integral weight in the projective class of an element of this polyhedron
and $S$ is a surface carried by $B$ with weight $w$, then $S$ is an extremal surface for $g$.

\medskip

From this discussion we can conclude the following:

\begin{enumerate}
\item{$\scl(g) \in \Q$ for every $g \in [F,F]$}
\item{An extremal surface exists for every $g$}
\item{There is an algorithm to calculate $\scl$ and to construct all monotone extremal surfaces
for every $g \in [F,F]$}
\end{enumerate}

\begin{remark}
If $f:S \to H$ is extremal, and there is an essential embedded loop $\alpha$ in $S$ such that
$f(\alpha)$ is freely homotopic to
a power of $\gamma$, we may cut open $S$ along $\alpha$ to produce a new extremal (but not necessarily 
monotone) surface.
\end{remark}

\subsection{Polyhedral norm}

In fact, very little is required to extend the results of the last few sections to finite dimensional
vector spaces of $B_1(F)$. Let $g_1,g_2,\cdots,g_m \in F$ be represented by loops $\gamma_1,\gamma_2,\cdots,\gamma_m$
in bridge position in $H$. Denote $\gamma = \cup_i \gamma_i$.
Fix an orientation on each $\gamma_i$. 
A monotone surface $f:S \to H$ whose boundary wraps some positive number of times around the various $\gamma_i$ can
be compressed, boundary compressed and homotoped until it is composed of a union of rectangles and
polygons carried by a fixed simple branched surface $B$ as above. There is a rational linear map
$\partial: V \to H_1(\bigcup_i \gamma_i;\R) \cong \R^m$. 

Let $K$ be the kernel of the inclusion map
$H_1(\bigcup_i \gamma_i;\R) \to H_1(H;\R)$. Let $K^+$ be the intersection of $K$ with the orthant spanned
by non-negative combinations of the $[\gamma_i]$ in $H_1(\bigcup_i \gamma_i;\R)$. Given $k \in K^+$
corresponding to a collection of non-negative weights on the $g_i$,
we have an equality
$$\scl(k) = \inf_{w \in \partial^{-1}(k)\cap W^+} \frac {-\chi^-(w)} 2$$ 
and therefore $\scl$ is a piecewise rational linear function on $K^+$. There are finitely many orthants
of this kind, corresponding to choices of orientation on each $\gamma_i$, so $\scl$ is piecewise rational linear
on $K$.

Putting this together proves our main result:

\begin{main_rat_thm}
Let $F$ be a free group.
\begin{enumerate}
\item{$\scl(g) \in \Q$ for all $g \in [F,F]$.}\label{rational_bullet}
\item{Every $g  \in [F,F]$ rationally bounds an extremal surface (in fact, every rational chain $C$ in $B_1^H$ rationally
bounds an extremal surface)}\label{extremal_bullet}
\item{The function $\scl$ is piecewise rational linear on $B_1^H$.}\label{polyhedron_bullet}
\item{There is an algorithm to calculate $\scl$ on any finite dimensional rational subspace of
$B_1^H$.}
\end{enumerate}
\end{main_rat_thm}

Note that bullet~(\ref{rational_bullet}) is a special case of bullet~(\ref{polyhedron_bullet}).

\subsection{Other groups}

\begin{definition}
Say that a group $G$ is PQL (pronounced ``pickle'')
if $\scl$ is piecewise rational linear on $B_1^H(G)$.
\end{definition}

The PQL property is inherited by supergroups of finite index. This follows in a straightforward way 
from the following Lemma, which relates $\scl$ in groups and in finite index subgroups.

\begin{lemma}\label{finite_index_formula}
Let $G$ be a group, and $H$ a subgroup of $G$ of finite index. Let $X$ be a CW complex with
$\pi_1(X) = G$, and let $\widehat{X}$ be a covering space with $\pi_1(\widehat{X})=H$. Let
$g_1,\cdots,g_m$ be elements in $G$, and for each $i$, let 
$\gamma_i$ be a loop in $X$ representing the conjugacy
class of $g_i$. Let $\beta_1,\cdots,\beta_l$ be the preimages of the
$\gamma_i$ in $\widehat{X}$, and
$h_1,\cdots,h_l$ the corresponding conjugacy classes in $H$. Then
$$\scl_H(\sum h_i) = |G:H| \cdot \scl_G(\sum g_i)$$
\end{lemma}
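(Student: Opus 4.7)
The plan is to take Lemma~\ref{general_euler_lemma} as the operational definition of $\scl$ in both $G$ and $H$, and to transfer admissible surfaces across the finite covering $p:\widehat X \to X$ in each direction, establishing the two matching inequalities.

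For $\scl_H(\sum h_j) \le |G:H|\cdot \scl_G(\sum g_i)$, I would start with a monotone admissible $f:S \to X$ for $\sum g_i$ with $n(S)=n$, and form the pullback $\widehat S := S \times_X \widehat X$, with canonical projections $q:\widehat S \to S$ and $\widehat f:\widehat S \to \widehat X$. Since $q$ is an unbranched cover of degree $|G:H|$, and disk components of $\widehat S$ can be discarded without changing $\chi^-$ or boundary homology, one gets $-\chi^-(\widehat S) = |G:H|\cdot -\chi^-(S)$. The essential point is that $\widehat f$ is admissible for $\sum h_j$ with $n(\widehat S)=n$, which reduces to a covering-theoretic degree count. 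A boundary $\partial_\ell$ of $S$ mapping to $\gamma_i$ with degree $n_\ell$ pulls back to the cover of $S^1$ classified by the action of $g_i^{n_\ell}$ on $G/H$, whose connected components are indexed by the $\langle g_i^{n_\ell}\rangle$-orbits. These refine the $\langle g_i\rangle$-orbit of size $d_j$ corresponding to $\beta_j$ into $\gcd(n_\ell,d_j)$ orbits each of size $d_j/\gcd(n_\ell,d_j)$, and each such orbit gives a circle wrapping $\beta_j$ with degree $n_\ell/\gcd(n_\ell,d_j)$, for a total contribution of $n_\ell$. Summing over boundary components of $S$ mapping to $\gamma_i$ gives total wrapping $n$ around every $\beta_j$ over $\gamma_i$, uniform in $i$ by admissibility of $f$; hence $n(\widehat S)=n$ and the ratio $-\chi^-/2n$ scales by $|G:H|$.

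For $|G:H|\cdot \scl_G(\sum g_i)\le \scl_H(\sum h_j)$, I would take any admissible $\widehat f:\widehat S \to \widehat X$ for $\sum h_j$ with $n(\widehat S)=\hat n$ and compose with $p$. The quantity $-\chi^-(\widehat S)$ is unchanged, while a boundary wrapping $\beta_j$ with degree $\hat n$ maps under $p$ to wrap $\gamma_{i(j)}$ with degree $\hat n\,d_j$; summing over $j$ with $i(j)=i$ gives total wrapping $\hat n\sum d_j = \hat n\cdot|G:H|$ around each $\gamma_i$, uniform in $i$. Thus $p\circ\widehat f$ is admissible for $\sum g_i$ with $n = \hat n\cdot|G:H|$, and the equality $-\chi^-(\widehat S)/(2\hat n|G:H|) = (1/|G:H|)\cdot -\chi^-(\widehat S)/(2\hat n)$ yields the desired inequality after taking infima.

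The main obstacle is the uniformity in the first direction: one must see that $\partial\widehat S$ wraps \emph{every} preimage $\beta_j$ with the \emph{same} degree $n$, rather than a $j$-dependent degree, since only then is $\widehat S$ admissible for the chain $\sum h_j$ in the sense of Lemma~\ref{general_euler_lemma}. This uniformity rests on the cancellation $\gcd(n_\ell,d_j)\cdot n_\ell/\gcd(n_\ell,d_j) = n_\ell$, which removes the apparent $d_j$-dependence in each boundary's contribution and makes the bookkeeping go through.
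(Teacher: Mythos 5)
Your proof is correct and takes exactly the approach the paper does: lift an admissible $f:S\to X$ through the pullback to get an admissible surface over $\widehat X$, and conversely push an admissible $\widehat f:\widehat S\to\widehat X$ down by composing with the covering projection, then apply Lemma~\ref{general_euler_lemma}. The paper's proof is the same in outline but leaves the covering-theoretic bookkeeping entirely to the reader --- in particular the uniformity of $n(\widehat S)$ across the $\beta_j$, which you correctly settle via the $\gcd(n_\ell,d_j)\cdot n_\ell/\gcd(n_\ell,d_j)=n_\ell$ cancellation --- so you have filled in the details the paper's terse proof omits.
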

\begin{proof}
If $f:S \to X$ is a map of a surface whose boundary maps to the $\gamma_i$, then $S$ admits a finite
index cover $\widehat{S}$ such that $f$ lifts to $\widehat{f}:\widehat{S} \to \widehat{X}$.
Conversely, given $f:S \to \widehat{X}$ with boundary mapping to the $\beta_i$, the
composition of $f$ with the covering projection $\widehat{X} \to X$ takes $\partial S$ to $\gamma$.
Now apply Lemma~\ref{euler_lemma} and Lemma~\ref{general_euler_lemma}.
\end{proof}

\begin{theorem}
Let $M$ be a non-compact Seifert-fibered $3$-manifold. Then $\pi_1(M)$ has the PQL property.
\end{theorem}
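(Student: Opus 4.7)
The plan is to reduce to the free-group case via a finite cover that trivializes the Seifert fibration. Since $M$ is non-compact and Seifert-fibered, its base $2$-orbifold $B$ is non-compact, so $\pi_1^{\mathrm{orb}}(B)$ is a free product of a finite-rank free group (from the underlying surface) with finite cyclic groups (from the cone points), and is therefore virtually free. Taking a torsion-free finite-index subgroup gives a finite manifold cover $S\to B$ with $S$ a non-compact surface, and pulling back the Seifert fibration gives a (non-exceptional) $S^1$-bundle over $S$. Because $H^2(S;\Z)=0$ for any non-compact surface, this bundle is trivial, so there is a finite cover $\hat M\to M$ with $\hat M\cong S\times S^1$ and $\pi_1(\hat M)\cong F\times\Z$, where $F=\pi_1(S)$ is free. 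Lemma~\ref{finite_index_formula} (combined with the rational linear transfer of chains from $\pi_1(M)$ to $\pi_1(\hat M)$) shows that the PQL property is inherited by finite-index supergroups, so it suffices to establish PQL for $F\times\Z$.

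The main step is to show that $\scl_{F\times\Z}(C)=\scl_F(\pi_F(C))$ for every $C\in B_1^H(F\times\Z)$, where $\pi_F:F\times\Z\to F$ is projection onto the free factor. The inequality $\scl_{F\times\Z}(C)\geq\scl_F(\pi_F(C))$ follows from monotonicity of $\scl$ under surjective homomorphisms. For the reverse, I would invoke Generalized Bavard Duality (Theorem~\ref{general_duality_theorem}): every $\phi\in Q(F)$ pulls back to a homogeneous quasimorphism on $F\times\Z$ with the same defect; conversely, every $\phi\in Q(F\times\Z)$ splits as $\phi(g,n)=\phi_F(g)+\lambda n$ for some $\phi_F\in Q(F)$ and $\lambda\in\R$, as follows by applying the defect inequality to the commuting pair $(g,0),(e,n)$, iterating, and passing to the homogeneous limit. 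The linear summand $\lambda n$ contributes nothing to $D(\phi)$ and annihilates $B_1$ (since $\pi_\Z(C)=0$ for $C\in B_1$), so the Bavard supremum collapses to the supremum over $Q(F)$ evaluated on $\pi_F(C)$.

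Since $\pi_F:B_1^H(F\times\Z)\to B_1^H(F)$ is a rational linear map and $\scl_F$ is piecewise rational linear on every finite-dimensional rational subspace of $B_1^H(F)$ by the Rationality Theorem, $\scl_{F\times\Z}$ is PQL on $B_1^H(F\times\Z)$. Descending via Lemma~\ref{finite_index_formula} yields the PQL property for $\pi_1(M)$. The main obstacle is verifying the splitting of $Q(F\times\Z)$ and confirming that the defect of the sum equals the defect of the free-group summand; the preceding reduction to a genuine direct product (rather than a nontrivial central extension) is essential here, since in a nontrivial central $\Z$-extension a bounded Euler class could in principle contribute quasimorphisms not pulled back from the base.
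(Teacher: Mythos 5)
Your argument is correct, and the overall skeleton matches the paper's: reduce to a finite-index subgroup isomorphic to $F\times\Z$, establish PQL there, and transfer up via Lemma~\ref{finite_index_formula}. The difference lies in how PQL for $F\times\Z$ is established. The paper invokes Bouarich's theorem (that $H^2_b(F)\to H^2_b(F\times\Z)$ is an isomorphism because the kernel $\Z$ is amenable) to conclude $Q(F\times\Z)=Q(F)\oplus H^1(\Z)$, while you prove the same splitting directly from first principles: any homogeneous quasimorphism restricted to a pair of commuting elements is additive (by the standard $(ab)^n=a^nb^n$ trick you sketch), so $\phi(g,n)=\phi(g,0)+\phi(e,n)$, and homogeneity on the central $\Z$ gives $\phi(e,n)=\lambda n$. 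You also correctly observe that this decomposition preserves the defect exactly and that the linear summand kills $B_1$, so Bavard duality collapses to $\scl_F\circ\pi_F$. This is a legitimate, self-contained alternative to citing Bouarich, and arguably more transparent. Your elaboration of the first step (torsion-free finite-index subgroup of the orbifold group, $H^2(S;\Z)=0$ for noncompact $S$ trivializes the bundle) fills in detail the paper leaves implicit, and your closing caution about why a genuine product, not just a central extension, is needed is a sensible one to record — though for free $F$ it is automatic, since $H^2(F;\Z)=0$ forces any $\Z$-central extension of $F$ to split.
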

\begin{proof}
Since $M$ is noncompact, there is a finite index subgroup $H$ of $\pi_1(M)$ of the form
$H = \Z \oplus F$ where $F$ is free. Since $\Z$ is amenable, a theorem of Bouarich (see
\cite{Bouarich} or \cite{Calegari_scl}, \S~2.4) implies that $H^2_b(F) \to H^2_b(H)$ is
an isomorphism, and $Q(H)=Q(F)\oplus H^1(\Z)$. Hence $H$ is PQL by
Theorem~\ref{general_duality_theorem}. But then $\pi_1(M)$ is PQL by Lemma~\ref{finite_index_formula}.
\end{proof}

\begin{example}
The braid group $B_3$ is isomorphic to $\pi_1(S^3 - K)$ where $K$ is the trefoil knot
(of either handedness). This group is a central $\Z$ extension of $\Z/2\Z * \Z/3\Z$, which
admits a free subgroup of finite index. Hence $B_3$ has the PQL property.
\end{example}

\begin{question}
Does every $3$-manifold group have the PQL property?
\end{question}

\section{Computing stable commutator length}\label{algorithm_section}

In this section we discuss the implementation of the algorithm implicit in \S~\ref{free_group_section}, 
and study an explicit example. In order to describe the algorithm in a uniform way for free groups of any rank,
it is convenient to use the formalism described in Remark~\ref{branch_remark}, in which vertical arcs
in $\Gamma$ correspond to elements in a free generating set for $F$. If one is more comfortable working
with systems of compressing disks in a handlebody, one must work with {\em groupoid} generators for $F$ associated to
the system of vertical arcs in a splitting. In the case of a free group of rank $2$, both formalisms agree; the cautious
reader may prefer to stick to this case in what follows.

\medskip

Fix a free symmetric generating set $S$ for a free group $F$, 
and let $C= w_1 + w_2 + \cdots + w_m$ be an integral chain in $B_1^H(F)$, expressed as a formal sum of
cyclically reduced words in the generating set.
We denote the generators of $F$ by $a,b,c,\cdots$ and their inverses by $A,B,C,\cdots$.

\begin{definition}\label{arcs_and_polygons}
A {\em letter} is a specific character in a specific word; its {\em value} is the element of the generating set
that it represents.
An {\em arc} is an ordered pair $(\ell_1,\ell_2)$ of letters whose values are inverse in $F$. A {\em polygon} is
a cyclically ordered list of distinct
arcs $\alpha_0,\alpha_1,\cdots,\alpha_{m-1}$ so that for each $i$, the last letter of
$\alpha_i$ immediately precedes the first letter of $\alpha_{i+1}$ in some (cyclic) word $w_j$ (indices $i$ and
$i+1$ are taken mod $m$). The {\em length} of a polygon is the number of arcs appearing in the list.
\end{definition}

In the language of \S~\ref{free_group_section}, each oriented rectangle has four edges, two of which 
(the ``free edges'') correspond to vertical edges in $\Gamma$, and two of which (the ``glued edges'')
are glued up to some polygon. Each vertical edge in $\Gamma$ corresponds to a letter in some $w_i$. The two glued
edges of a rectangle determine two arcs $\alpha$ and $\alpha'$ consisting of the same pair of letters in opposite orders;
i.e.\ if $\alpha = (\ell_1,\ell_2)$ then $\alpha' = (\ell_2,\ell_1)$.

A polygon (in the sense of \S~\ref{free_group_section}) has edges that are glued up to edges of rectangles,
and edges on horizontal edges of $\Gamma$. Horizontal edges are of purely psychological value; they do not contribute
to $\chi$. So to each such polygon there is associated a polygon (in the sense of Definition~\ref{arcs_and_polygons}) 
consisting of a cyclically ordered list of arcs. The condition that the arcs making up a polygon are distinct is the analog
of the condition that each polygon (in the sense of \S~\ref{free_group_section}) has vertices mapping to distinct
points of $Z$.

Let $P$ denote the real vector space spanned by the set of polygons. Let $A$ denote the vector space spanned
by the set of arcs modulo the relation that $(\ell_1,\ell_2) = -(\ell_2,\ell_1)$. There is a linear map
$\partial:P\to A$ sending a polygon to the formal sum of the arcs that make it up. Let $W$ denote the kernel of $\partial$,
and $W^+$ the cone of non-negative vectors in $W$. The relations $(\ell_1,\ell_2)= -(\ell_2,\ell_1)$ and the condition
$\partial=0$ express the gluing equations for $B$ in terms of weights on polygon sectors.
The spaces $W$ and $W^+$ are naturally isomorphic to the spaces with the
same names described in \S~\ref{free_group_section}. A polygon of length $m$ contributes $(m-2)/4$ to $-\chi^-/2$.
The condition that a formal sum of polygons has boundary equal to the chain $C$ is a further list of linear conditions, one for
each word in $C$. Minimizing the objective function $-\chi^-/2$ is a linear programming problem, which can
be solved in a number of ways.

\begin{example}
Let $w=ababABaBAbAB$ (remember that $A=a^{-1}$ and $B=b^{-1}$). There are $36$ arcs, one for each ordered pair of
letters in $w$ with opposite values. Labeling the
letters of $w$ by integers from $0$ to $11$, and using the shorthand $X=10$ and $Y=11$, these arcs are
$$04, 40, 08, 80, 0X, X0, 15, 51, 17, 71, 1Y, Y1, 24, 42, 28, 82, 2X, X2,$$
$$35, 53, 37, 73, 3Y, Y3, 46, 64, 59, 95, 68, 86, 6X, X6, 79, 97, 9Y, Y9$$
There are 625 polygons, 43 of length 4 or less (since the letters of $w$ alternate between one of $a^\pm$
and one of $b^\pm$, every polygon has even length), including
$$(04, 51, 28, 9Y), (04, 53, 40, 1Y), \cdots, (X6, 79)$$
Let $P=\R^{625}$ denote the vector space of formal linear combinations of polygons, and let $p_i$
for $0 \le i \le 624$ denote the components of a vector in $P$. For each $i$, let $l_i$ denote the length of
polygon $i$. Compatibility of gluing along rectangles (i.e.\ $\partial = 0$ above) imposes one equation of
the form $\sum p_k = \sum p_l$ for each pair of arcs $ij, ji$ where the polygons of type $k$ are those that
contain the arc $ij$, and polygons of type $l$ are those that contain the arc $ji$ (note that a polygon
type might contain both $ij$ and $ji$ or neither). There are half as many equations of this kind as arcs,
hence $18$ equations. 

Restricting to geometrically sensible answers imposes the conditions $p_i \ge 0$.
The condition that the boundary of a (formal) surface corresponding to a weight represents
$[\gamma]$ in homology is the equation
$$\sum_i l_i p_i = |w| = 12$$
Subject to this list of constraints, which determine a compact convex polyhedron in $\R^{625}$, we minimize
the objective function
$$\frac {-\chi^-}{2} = \sum_i \frac {(l_i -2)p_i} 4$$

This linear programming problem can be solved using exact arithmetic by the GNU package
{\tt glpsol} \cite{glpsol} or Masashi Kiyomi's program {\tt exlp} \cite{exlp} using
Dantzig's simplex method (see \cite{Dantzig}), and gives
the answer $\scl(w) = 5/6$. 

\medskip

An extremal solution found by the simplex method is always a vertex,
which can be projectively represented by an extremal surface. One such extremal surface found by this method is
determined by the identity
\begin{multline*}
[abaB, ABAbaBabAbaBABAbaBababABB] \cdot [ABAba, BabAbaBABAbba] \\
\cdot [BabABababA, aaBAAb] = a(baBABAbaBabA)^3A
\end{multline*}
exhibiting the cube of $baBABAbaBabA$ (i.e.\ the inverse of a cyclic conjugate of
$ababABaBAbAB$) as a product of three commutators. 
Since extremal surfaces are $\pi_1$-injective,
the group generated by $abaB,\cdots, aaBAAb$ is a free subgroup
of $F_2$ of rank $6$, equal to the image of $\pi_1$ of a genus $3$ surface with one puncture under
an injective homomorphism. This fact may be verified independently e.g.\ by using Stallings folding,
and gives an independent check of the validity of the calculation.
\end{example}

As was discussed in the introduction, this example answers Bavard's question in the negative. Other
simple examples are
$$\scl(aababaBAbAABAB) = 2/3$$ 
and
$$\scl(a + BB + AAbab) = 3/4$$

\subsection{Addendum}

Several developments building on this material have taken place
between the time this paper was submitted and was accepted for publication. It would be inappropriate to
go into too much detail, but for the convenience of the reader, we summarize some of the most interesting
points.

Firstly, the algorithm described above has been improved to run in {\em polynomial time}. An account of
this improvement is described in \cite{Calegari_scl}, \S~4.1.7--8. The program {\tt scallop} (source code
available at \cite{Calegari_scallop}) implements this algorithm, and can be used to compute $\scl$ in
$F_2$ on words of length $\sim$ 60. Experiments using {\tt scallop} reveal additional structure in the $\scl$ spectrum
of a free group, partially explained in a forthcoming paper \cite{Calegari_sails}. 
In particular, it is possible
to prove rigorously that in any nonabelian free group, the image of $\scl$ contains nontrivial accumulation
points, and takes on values with any denominator.

Secondly, the polyhedral structure on the $\scl$ norm unit ball is studied in \cite{Calegari_faces}, and it is
shown that every realization of a free group $F$ as $\pi_1(S)$ where $S$ is an oriented surface with boundary
is associated to a codimension one face of the boundary of the $\scl$ norm unit ball in $B_1^H$.

Thirdly, the extremal surfaces guaranteed in bullet~(\ref{extremal_bullet}) of the Rationality Theorem are
exploited to construct closed surface subgroups in certain graphs of free groups amalgamated along 
cyclic subgroups. This is studied in \cite{Calegari_surface}, and further generalized in \cite{Gordon_Wilton}.

\section{Acknowledgment}

While writing this paper I was partially funded by NSF grant DMS 0707130.
I would like to thank Roger Alperin, Nathan Dunfield, Dieter Kotschick, Lars Louder, 
Jason Manning, Bill Thurston, Dongping Zhuang, the anonymous referees, and the members of the d\=osemi 
at the Tokyo Institute of Technology, especially Shigenori Matsumoto. I presented an 
incorrect argument, purporting to prove the main result of this paper, at the d\=osemi
in April 2007. Matsumoto asked a simple-sounding question about branch points, which turned
out to be a crucial detail that I had overlooked. Understanding this detail was the key
to obtaining the results in this paper. I would also like to explicitly thank Jason
Manning for a number of conversations that substantially increased my
confidence in the experimental results discussed in \S~\ref{algorithm_section}.


\begin{thebibliography}{99}
\bibitem{Bavard}
	C. Bavard,
	\emph{Longueur stable des commutateurs},
	Enseign. Math. (2), {\bf 37}, 1-2, (1991), 109--150
\bibitem{Bouarich}
	A. Bouarich,
	\emph{Suites exactes en cohomologie born\'ee r\'eelle des groupes discrets},
	C. R. Acad. Sci. Paris S\'er. I Math. {\bf 320} (1995), no. 11, 1355--1359
\bibitem{Brooks}
	R. Brooks,
	\emph{Some remarks on bounded cohomology},
	Riemann surfaces and related topics: Proceedings of the 1978 Stony Brook Conference
	(SUNY Stony Brook NY 1978) Ann. Math. Stud. {\bf 97}, Princeton Univ. Press 1981, 53--63
\bibitem{Calegari_foliations}
	D. Calegari,
	\emph{Foliations and the geometry of $3$-manifolds},
	Oxford Mathematical Monographs, Oxford Univ. Press, Oxford, 2007
\bibitem{Calegari_surface}
	D. Calegari,
	\emph{Surface subgroups from homology},
	Geom. Top. {\bf 12} (2008), 1995--2007
\bibitem{Calegari_faces}
	D. Calegari,
	\emph{Faces of the scl norm ball},
	Geom. Top. {\bf 13} (2009), 1313--1336
\bibitem{Calegari_sails}
	D. Calegari,
	\emph{Scl, sails and surgery},
	preprint, in preparation
\bibitem{Calegari_scallop}
	D. Calegari,
	{\tt scallop}, 
	computer program, available from 
	
	\quad\quad\quad\quad{\tt http://www.its.caltech.edu/$\sim$dannyc}
\bibitem{Calegari_scl}
	D. Calegari,
	\emph{scl},
	monograph, to appear in Memoirs MSJ; available from
	
	\quad\quad\quad\quad{\tt http://www.its.caltech.edu/$\sim$dannyc}
\bibitem{Calegari_Fujiwara}
	D. Calegari and K. Fujiwara,
	\emph{Stable commutator length in word hyperbolic groups},
	Groups, Geom. Dyn. to appear
\bibitem{Dantzig}
	G. Dantzig,
	\emph{Linear Programming and Extensions},
	Princeton Univ. Press, Princeton, 1963
\bibitem{Gabai_foliations}
	D. Gabai,
	\emph{Foliations and the topology of $3$-manifolds},
	J. Diff. Geom. {\bf 18} (1983), no. 3, 445--503
\bibitem{Gordon_Wilton}
	C. Gordon and H. Wilton,
	\emph{On surface subgroups of doubles of free groups},
	preprint, arXiv:0902.3693
\bibitem{Gromov_bounded}
	M. Gromov,
	\emph{Volume and bounded cohomology},
	Inst. Hautes \'Etudes Sci. Publ. Math. No. 56 (1982), 5--99
\bibitem{Gromov_asymptotic}
	M. Gromov,
	\emph{Asymptotic invariants of infinite groups},
	London Math. Soc. Lecture Note Ser. {\bf 182} Cambridge Univ. Press, Cambridge, 1993
\bibitem{Hempel}
	J. Hempel,
	\emph{$3$-Manifolds},
	Ann. Math. Stud. {\bf 86}, Princeton Univ. Press, Princeton, 1976
\bibitem{exlp}
	M. Kiyomi,
	{\tt exlp},
	computer program, available from 
	
	\quad\quad\quad{\tt http://members.jcom.home.ne.jp/masashi777/exlp.html}	
\bibitem{glpsol}
	A. Makhorin,
	{\tt glpsol},
	computer program, available from {\tt http://www.gnu.org}
\bibitem{Mosher_Oertel}
	L. Mosher and U. Oertel,
	\emph{Two-dimensional measured laminations of positive Euler characteristic},
	Quart. J. Math. {\bf 52} (2001), 195--216
\bibitem{Harer_Penner}
	R. Penner with J. Harer,
	\emph{Combinatorics of train tracks},
	Ann. Math. Stud. {\bf 125}, Princeton Univ. Press, Princeton, 1992
\bibitem{Oertel}
	U. Oertel,
	\emph{Homology branched surfaces: Thurston's norm on $H_2(M^3)$},
	Low-dimensional topology and Kleinian groups (Coventry/Durham, 1984), 253--272
	London Math. Soc. Lecture Notes Ser. {\bf 112} Cambridge Univ. Press, Cambridge, 1986
\bibitem{Rannard}
	R. Rannard,
	\emph{Computing immersed normal surfaces in the figure-eight knot complement},
	Experiment. Math. {\bf 8} (1999), no. 1, 73--84
\bibitem{Scott}
	P. Scott,
	\emph{Subgroups of surface groups are almost geometric},
	J. London Math. Soc. (2) {\bf 17} (1978), no. 3, 555--565
\bibitem{Serre_trees}
	J.-P. Serre,
	\emph{Trees},
	Springer Monographs in Mathematics (corrected 2nd printing, trans. J. Stillwell), Springer-Verlag, Berlin, 2003
\bibitem{Thurston_norm}
	W. Thurston,
	\emph{A norm for the homology of $3$-manifolds},
	Mem. Amer. Math. Soc. {\bf 59} (1986), no. 339, i-vi and 99-130
\bibitem{Zhuang}
	D. Zhuang,
	\emph{Irrational stable commutator length in finitely presented groups},
	Jour. Mod. Dyn. 2 (2008) no. 3, 499--507
\end{thebibliography}
\end{document}